\theoremstyle{definition}
\newtheorem{theorem}{Theorem}
\newtheorem{definition}{Definition}
\newtheorem{lemma}{Lemma}
\newtheorem{corollary}{Corollary}
\newtheorem{proposition}{Proposition}
\newtheorem{remark}{Remark}
\newtheorem{example}{Example}
\newcommand{\semantics}[1]{[\![\mbox{\em $ #1 $\/}]\!]}
\newcommand{\prob}{\mathbb{P}}
\newcommand{\probsymbol}{\mathsf{P}}
\newcommand{\Val}{\mathrm{Val}}
\newcommand*{\sanssum}{\DOTSB\mathop{\mathpalette{\sans@op\sum{0.9}}\sans@sum}\slimits@}
\newcommand*{\sansprod}{\DOTSB\mathop{\mathpalette{\sans@op\prod{0.8}}\sans@prod}\slimits@}
\newcommand*{\sans@op}[4]{%
   \sbox0{\m@th$\m@th#3#1$}%
   \unitlength\ht0
   \advance\unitlength\dp0
   \vcenter{%
      \hbox{%
         \begin{picture}(#2,1)
            \linethickness{%
               \ifx#3\displaystyle2\fontdimen8\textfont\else
               \ifx#3\textstyle1.7\fontdimen8\textfont\else
               \ifx#3\scriptstyle1.6\fontdimen8\scriptfont\else
               1.6\fontdimen8\scriptscriptfont\fi\fi\fi 3}
            \roundcap
            \roundjoin
            #4
         \end{picture}%
      }%
   }%
}
\newcommand*{\sans@sum}{%
   \polyline(0.83,0.07)(0.08,0.07)(0.45,0.5)(0.08,0.93)(0.83,0.93)%
}
\newcommand*{\sans@prod}{%
   \put(0.05,0.93){\line(1,0){0.7}}%
   \put(0.19,0.07){\line(0,1){0.86}}%
   \put(0.61,0.07){\line(0,1){0.86}}%
}
\title{On Probabilistic and Causal Reasoning with Summation Operators}
\author{Duligur Ibeling$^*$, Thomas Icard$^*$, and Milan Mossé$^\dagger$\\
{\small $^*$ Stanford University}\\
{\small $^\dagger$ University of California, Berkeley}}
\date{May 2024}
\begin{document}
\maketitle


%


\begin{abstract}
    \cite{IBELING2023103339} axiomatize increasingly expressive languages of causation and probability, and \cite{mosse2022causal} show that reasoning (specifically the satisfiability problem) in each causal language is as difficult, from a computational complexity perspective, as reasoning in its merely probabilistic or ``correlational'' counterpart. Introducing a summation operator to capture common devices that appear in applications---such as the $do$-calculus of \cite{Pearl2009} for causal inference, which makes ample use of marginalization---\cite{van2023hardness} partially extend these earlier complexity results to causal and probabilistic languages with marginalization. We  complete this extension, fully characterizing the complexity of probabilistic and causal reasoning with summation, demonstrating that these again remain equally difficult. Surprisingly, allowing free variables for random variable values results in a system that is undecidable, so long as the ranges of these random variables are unrestricted. We finally axiomatize these languages featuring marginalization (or more generally summation), resolving open questions posed by \cite{IBELING2023103339}.
\end{abstract}

\section{Introduction and Motivation}

One important research program in probability logic is to design, analyze, and compare alternative formal language fragments designed to reason about probability distributions. The usual logical tradeoff between expressive power---how much one can say---and (both formal and intuitive) complexity is very much on display in this subject. At one extreme are very simple languages that admit only of ``qualitative'' likelihood comparisons between events, which are often no more complex than propositional logic \citep{fagin1990logic, IBELING2023103339}. Toward the other extreme are first-order probability logical languages whose validity problems are often highly undecidable, not even arithmetical \citep{abadi1994decidability}. 

Summing over the values of a random variable is ubiquitous in the theory and application of probability. Marginalizing out a variable, or taking an expectation of a (discrete) variable, for example, involves summing an expression over all possible values of that variable. Summation operators, however, rarely appear in studies of probability logic. In the finite setting---for instance, where all random variables come with a fixed, finite range of possible values---this is immaterial when it comes to the expressive power of the language. Any such sum can simply be replaced be an explicit application of binary addition over all the summands. There are several limitations to this strategy, however. The first is obviously that it does not extend to the infinitary setting. Indeed, it does not even work in the setting of finite ranged variables but where those ranges (and perhaps their sizes) are unknown. That is, the variable values may not all be named by a constant in the language. A second, more subtle limitation is that summation operators may affect the complexity of the language, since they may facilitate succinct encodings of statements that would otherwise require very long expressions. As recently shown by \cite{van2023hardness}, adding a marginalization operator to a polynomial language results in a substantial complexity increase.

The present contribution aims at a thorough analysis of probability logics with summation operators, building on previous work, and focusing on questions of axiomatization and computational complexity. Part of our motivation comes not from ordinary probabilistic reasoning, but from problems in causal reasoning. In addition to the well-known logical problem of induction---the fact that any pattern of observations is logically compatible with any continuation of it---there is also a deep logical problem of causation: roughly, the fact that any pattern of observations logically underdetermines what would happen to a system when some agent acts upon the system. By capturing what is meant by ``data,'' ``causal assumptions,'' and ``causal conclusions" with formal languages, these types of result can be construed as (everywhere or \emph{almost} everywhere) logical undefinability results \citep{bareinboim2022pearl,ibeling2021topological}. The relevant formalisms commonly build on an appropriate choice of probability logical language. 

In addition to facilitating such negative results, logical formalization of causal reasoning also facilitates the analysis and verification of positive results. Seminal observations across many fields---from econometrics to artificial intelligence to epidemiology---can be cast in a rigorous manner, sometimes highlighting and clarifying hidden assumptions that might otherwise go unnoticed. Here are two well-known \emph{identifiability} results that help motivate the kinds of languages we study in this paper:
\begin{example}[Local average treatment effect] \label{example:late }Imagine a clinical trial that involves assignment ($Z$) to a control condition (value $z^-$) or a treatment condition (value $z^+$), with some outcome $Y$ of interest. The trial cannot guarantee full compliance, so there is an additional variable ($X$) for whether the treatment is in fact administered (has value $x^+$, as opposed to $x^-$).

A quantity of interest is the causal effect of the treatment ($X$) on the outcome ($Y$), among those who comply with their assignment. This is the conditional expectation $\mathbf{E}(Y_{x^+} - Y_{x^-} \mid x^+_{z^+} \land x^-_{z^-})$,\footnote{Note that $x^+_{z^+}$, etc. abbreviates the potential outcome $X_{Z = z^+} = x^+$, etc.} which in general is given by the sum, 
\begin{equation}
\sanssum_{y, y'} \, (\mathsf{y} - \mathsf{y}') \cdot \probsymbol(y_{x^+} \land y'_{x^-} \mid x^+_{z^+} \land x^-_{z^-}),\label{eqn:latedef}
\end{equation}
and thus when $Y$ is also binary (with range $\{y^- = 0, y^+ = 1\}$), simply by the difference $\probsymbol(y^+_{x^+} \mid x^+_{z^+} \land x^-_{z^-}) - \probsymbol(y^+_{x^-} \mid x^+_{z^+} \land x^-_{z^-})$. 

For various reasons researchers may not be able to track which individuals comply with their assignments. This is a typical ``Level 3'' counterfactual claim, a class that is in general difficult to ascertain empirically (see \citealt{bareinboim2022pearl,ibeling2021topological}). 
It was observed by \cite{angrist1996identification} that \eqref{eqn:latedef} can nonetheless be determined from typically available experimental data, provided two assumptions are made. First, it must be that $Z$ (assignment) affects $Y$ (the outcome) only through the variable $X$. Second, there should be no \emph{defiers}, that is, individuals who would take the treatment if and only if they are assigned to the control condition. Symbolically, using notation to be introduced in the sequel, this positive result can be viewed as a fact about entailment in suitable causal-probability logical languages:
\begin{multline}
\big[y \not\equiv y' \rightarrow \probsymbol(y_{x,z^+} \wedge y'_{x, z^-})\approx \underline{0}\big], \quad 
\probsymbol(x^-_{z^+} \wedge x^+_{z^-}) \approx \underline{0}    \\
\vDash  \mathbf{E}(Y_{x^+} - Y_{x^-} \mid x^+_{z^+} \land x^-_{z^-}) \approx \frac{\mathbf{E}\big(Y_{z^+, X_{z^+}} - Y_{z^-, X_{z^-}}\big)}{\mathbf{E}(X_{z^+} - X_{z^-})}.\label{eqn:lateidentification}
\end{multline}
Any situation in which the claims appearing before the turnstile symbol $\vDash$ are all true will guarantee that the conditional expectation after the symbol is equal to this ratio. In the setting where $Y$ is binary, this can be derived in a relatively simple logical calculus for (probabilistic) causal reasoning \citep{ibeling2023}, and it can also be shown that both numerator and denominator of the ratio are identifiable.
\end{example}
\begin{example}[Front-door criterion] \label{example:front} For a second example, suppose one is interested in the probability of $Y$ taking on some value $y$ given an intervention setting variable $X$ to $x$,  notated by $\probsymbol(y_x)$. Imagine, however, that one has only passive observational access to the $X$ and $Y$. In general, it is impossible to infer $\probsymbol(y_x)$, even given perfect access to the full joint distribution $\probsymbol(X,Y)$ (see again \citealt{bareinboim2022pearl}).

Nevertheless, as shown in \cite{Pearl1995}, it is possible to determine such probabilities exactly from interventional data, provided one can make certain independence assumptions involving one more ``intermediate mechanism'' $Z$. The crucial fact is the following causal-probabilistic logical entailment:
\begin{multline*}
\probsymbol(z_x) \approx \probsymbol(z\mid x), \quad
\probsymbol(x_z) \approx \probsymbol(x),\quad
\probsymbol(y_x\mid z_x) \approx \probsymbol(y_{x,z}),\quad
\probsymbol(y_{x,z})  \approx \probsymbol(y_z), \quad
\probsymbol(y_z\mid x_z) \approx \probsymbol(y\mid x,z)\\
\vDash \probsymbol(y_x) \approx \sanssum_{z} \probsymbol(z\mid x) \cdot \sanssum_{x'}\probsymbol(y\mid x',z)\cdot\probsymbol(x').
\end{multline*}
So long as the equalities before the turnstile all hold, the final identity is guaranteed. Pearl's \citeyearpar{Pearl1995,Pearl2009} \emph{$do$-calculus} presents a correspondence between graphical properties and independencies. For instance, all of the equalities before the turnstile will hold on the graph depicted in Figure~\ref{fig:frontdoorgraph}.
\begin{figure}[h]
\begin{center}
\begin{tikzpicture}
  \node (s0) [draw=black,circle] at (0,0) {$X$};
  
  \node (s2) [draw=black,circle] at (2,0) {$Z$};
  
  \node (s3) [draw=black,circle] at (4,0)  {$Y$};
  
  \node (s4) [circle,fill=gray!40] at (2,1.25) {$U$};
 
  \path (s0) edge[->,thick] (s2);
  
  \path (s2) edge[->,thick] (s3);
  
    \path (s4) edge[->,dotted] (s0);
  
  \path (s4) edge[->,dotted] (s3);
  
 \end{tikzpicture} 
 \end{center} 
 \caption{Front-door graph.}\label{fig:frontdoorgraph}
 \end{figure}
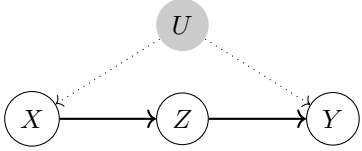 \noindent In other words, if a practitioner happens to know that a domain is one in which $X$ causes $Y$ only through an intermediate mechanism $Z$, and any further confounding ($U$) exerts influence on only $X$ and $Y$ (and in particular, not on $Z$), then the identification is valid. That is, one can determine the interventional probability knowing only the relevant observational probabilities. 
\end{example}

Both of these examples illustrate cases where some constraints on a causal setup---i.e., the expressions on the left-hand side of the turnstile---are sufficient to entail that some causal quantity of interest on the right-hand side can be identified with another expression that may be more empirically tractable. Notably, these conclusions follow as a matter of logical entailment.

A natural starting point for formalizing these styles of reasoning is to employ some form of probability logic. This is precisely the strategy pursued in recent work by \cite{ibelingicard2020,ibeling2021topological,ibeling2023,mosse2022causal,bareinboim2022pearl}. As a typical example language, we might have Boolean combinations of real polynomials over probability terms, where those probability terms include interventional probabilities like $\probsymbol(y_x)$, as well as so-called counterfactual probabilities like $\probsymbol(y^+_{x^+} \wedge y^-_{x^-})$, and so on. Provided the variable ranges are all finite, examples like those in Exs. \ref{example:late } and \ref{example:front} can be easily formalized. The works cited above investigate standard logical issues for such languages, viz. complexity, definability, axiomatization, etc. In particular, \cite{mosse2022causal} characterize the complexity of (suitably formalized) causal reasoning, thereby circumscribing the complexity of \href{https://www.causalfusion.net}{common tools} for causal reasoning.

While such formalization is possible in principle, it does risk distorting some of the logical and computational issues at play. The aforementioned point about succinctness in particular may be relevant. As \cite{van2023hardness} point out, supposing that sums are always expanded out to an exponentially-large explicit sum threatens to belie important complexity considerations. Indeed, it may inflate the run-time of some algorithms, e.g. for estimating conditional interventional distributions \citep{shpitser2012identification}, from linear to exponential. Such considerations motivate the study of probability logics (with and without causal primitives) that explicitly include some kind of summation or marginalization operator.

Another motivation for the setting considered here comes from the observation that much of this style of reasoning does not depend on the specific assumption that the range of each random variable is finite and fixed in advance by the signature. Indeed, the entailment facts listed above are more general, in that they hold even if we do not make any upfront assumptions about variable ranges. Thus, in addition to the points about succinctness and complexity, there are also broader logical reasons for exploring such constructions. 

Summation operators introduce a further feature into probability logic that is not typically included in many presentations, namely free and bound (range) variables. Summation is essentially a variable-binding operator, so we need to build up expressions that include free (range) variables.
The presence of such variables also suggests possible uses in which they are left free. In fact, both Examples \ref{example:late } and \ref{example:front} involve free variables both in the ``premises'' (before the turnstile) and in the ``conclusion'' (after the turnstile). The intended interpretation of these free variables is universal. Roughly speaking (cf. Def. \ref{def:entailment} below for a precise statement), when we write $\varphi \vDash \psi$, if $\varphi$ has free variables $\mathbf{x}$ and $\psi$ has free variables $\mathbf{y}$, then this is to be interpreted as $\forall \mathbf{x} \varphi \Rightarrow \forall \mathbf{y} \psi$. Thus, for instance, in Example \ref{example:front} the entailment is to be read: if for all values $x$ in the range of $X$, $y$ of $Y$, and $z$ of $Z$, the equalities all hold---viz. $\probsymbol(z_x)\approx \probsymbol(z\mid x)$, and so on---then for all $x$ and $y$ the conclusion holds. (Note that $z$ does not appear free in the conclusion.)

A first contribution of the present paper is to observe that this move alone is actually sufficient to render the resulting probability-logical system undecidable. \cite{li2023undecidability} recently proved the following fact:
\begin{theorem}[{\citealt[Thm.~3]{li2023undecidability}}]\label{theorem: conditional independence undecidable}
    The following problem is undecidable: Where $\mathbf{U}_i, \mathbf{V}_i, \mathbf{W}_i$ denote three pairwise disjoint subsets of the random variables $X_1,...,X_k$ for each $i=0,...,\ell$, determine whether the implication
    \begin{equation}
    \bigwedge_{i=1}^\ell \mathbf{U}_i \perp \mathbf{V}_i \mid \mathbf{W}_i \rightarrow \mathbf{U}_0 \perp \mathbf{V}_0 \mid \mathbf{W}_0 \label{eqn:liCI}
    \end{equation}
    holds for all jointly distributed $X_1,\dots ,X_k$ with finite support. (The same also holds if stated for all discrete $X_1, \dots, X_k$---i.e., for all those each of which has finite or countably infinite support.)
\end{theorem}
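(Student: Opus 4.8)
The plan is first to locate the problem recursion-theoretically and then to reduce a known undecidable problem to it. Note that a CI statement $\mathbf{U} \perp \mathbf{V} \mid \mathbf{W}$ is equivalent, on a fixed finite support, to the system of \emph{polynomial} equations $p(\mathbf{u}, \mathbf{v} \mid \mathbf{w}) = p(\mathbf{u} \mid \mathbf{w})\,p(\mathbf{v} \mid \mathbf{w})$ ranging over all value assignments; so, taking the point probabilities as unknowns, the premises of \eqref{eqn:liCI} become polynomial equalities and the negated conclusion a polynomial disequality. Hence for each fixed support size $n$ the assertion ``there is a distribution on $n$ points satisfying all premises yet violating $\mathbf{U}_0 \perp \mathbf{V}_0 \mid \mathbf{W}_0$'' is an existential sentence over a real-closed field, decidable by Tarski's theorem. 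Enumerating over $n$ (and using the stated finite-support/discrete equivalence to guarantee a finite witness whenever the implication fails) semi-decides \emph{failure}, so ``the implication holds'' is co-r.e.; it then suffices to show that it is not itself r.e.

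For hardness, the plan is to exploit the tight dictionary between CI among discrete variables and the combinatorics of finite groups (the group-characterizable distributions of Chan and Yeung): to a finite group $G$ with distinguished subgroups $G_1, \dots, G_m$ one attaches variables $Y_j$ obtained by reducing a uniform $X \in G$ modulo $G_j$, under which statements $Y_i \perp Y_j \mid Y_k$ translate into product/intersection identities among the $G_j$. Using this dictionary I would encode an instance of an undecidable problem about finite algebras---for instance an undecidable property of finitely presented groups, or finite satisfiability à la Trakhtenbrot, both of which have non-r.e.\ complements that line up with the co-r.e.\ bound above---as a finite list of premise CI constraints together with a target $\mathbf{U}_0 \perp \mathbf{V}_0 \mid \mathbf{W}_0$ engineered so that the target is \emph{forced} precisely when no finite structure realizing the prescribed relations exists. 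Undecidability of the algebraic problem then transfers to \eqref{eqn:liCI}.

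The hard part---and the reason this is a genuine theorem rather than a corollary of the classical undecidability of embedded multivalued dependency implication---is faithfulness of the reduction in the \emph{probabilistic} rather than merely relational (support-based) semantics. A general discrete distribution satisfying the premise constraints need not be group-characterizable, so one cannot simply read the premises off as subgroup identities; one must instead show that the premises constrain \emph{arbitrary} distributions enough that the target CI holds for all of them iff the combinatorial witness is absent. The easy direction (a witnessing finite group yields a distribution violating the implication) is a direct construction, but the converse requires extracting algebraic structure from an arbitrary satisfying distribution. This is where I expect to lean on entropy-method tools---functional-dependency gadgets forcing terms like $H(Y \mid X) = 0$, and especially ``copy''/conditional-copy lemmas that glue independent copies of a distribution along a shared marginal---to manufacture, from any satisfying distribution, the group-like multiplication structure the encoding needs, while pushing finite-support witnesses through the countable case via the asserted equivalence. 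Arranging these gadgets so that the correspondence is exact in both directions, with the premise list kept finite, is the main obstacle.
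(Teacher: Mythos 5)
First, a point of reference: the paper you are being compared against does not prove this statement at all. It is imported verbatim as a citation to Li (2023, Thm.~3), and the paper's only use of it is to derive, via the encoding of conditional independence statements as formulas with free range variables, its own undecidability result for satisfiability. So the relevant comparison is with Li's proof, not with anything argued in this paper.

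Measured against that, your proposal gets the architecture right but is not a proof. The upper-bound half is fine for the finite-support variant: after clearing denominators, each CI constraint on a fixed support becomes a system of polynomial identities $p(\mathbf{u},\mathbf{v},\mathbf{w})\,p(\mathbf{w}) = p(\mathbf{u},\mathbf{w})\,p(\mathbf{v},\mathbf{w})$, so the existence of a counterexample of a given support size is an existential sentence over the reals, decidable by Tarski, and enumerating support sizes semi-decides failure. (Note, however, that the ``finite-support/discrete equivalence'' you invoke for the countable case is not what the theorem asserts---it asserts undecidability of both variants, not their instance-wise equivalence---so the countable case would need a separate argument.) The genuine gap is the hardness half, which is the entire content of the theorem. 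You correctly name the tools Li actually uses---the Chan--Yeung group-characterizable distributions, a co-r.e.\ undecidable source problem about finite groups (Slobodskoi's uniform word problem for finite groups is the one that fits the co-r.e.\ upper bound), and functional-dependency plus copy/conditioning gadgets to extract group-like structure from an \emph{arbitrary} premise-satisfying distribution---and you correctly diagnose why the classical EMVD/relational undecidability results do not transfer. But naming the obstacle is not overcoming it: the finite premise list, the choice of target CI, and the two faithfulness directions (above all, manufacturing algebraic structure from an arbitrary distribution, which in Li's argument is a substantial entropy-theoretic construction) are precisely what you defer as ``the main obstacle.'' As it stands, the proposal is a plausible research plan whose unexecuted core coincides with what the theorem actually requires, so it cannot be credited as a proof.
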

In other words, if we do not impose any restriction on the (finite) ranges of random variables, then the problem of deciding whether a set of conditional independence statements implies another is undecidable. Of course, such statements can be easily encoded in a system like the one we have been describing. For example the conditional independence \[
        \{X_{1}, \dots, X_{j}\} \perp \{Y_{1}, \dots, Y_{m}\} \mid  \{Z_{1}, \dots, Z_{n}\}
    \]
   can be expressed straightforwardly as the formula
    \begin{equation}
    \probsymbol(\mathbf{x} \land \mathbf{y} \mid  \mathbf{z}) \approx \probsymbol(\mathbf{x} \mid \mathbf{z}) \cdot \probsymbol(\mathbf{y}\mid \mathbf{z}) \label{eq:ind}
    \end{equation}
    where $\mathbf{x} = x_1 \land \dots \land x_j$, $\mathbf{y} = y_1 \land \dots \land y_m$, and $\mathbf{z} = z_1 \land \dots \land z_n$, with all these variables $x_1, y_1, z_1, \dots, x_j, y_m, z_n$ left free.
    Let $p_i$ for $i = 0,...,\ell$ denote such a statement for each triple of sets of random variables $(\mathbf{U}_i, \mathbf{V}_i, \mathbf{W}_i)$. Then the implication \eqref{eqn:liCI}
    is easily expressed as
    $
    \bigwedge_{i=1}^\ell p_i \vDash p_0.
    $
From Li's result, Thm. \ref{theorem: conditional independence undecidable}, it immediately follows that any language expressive enough to encode statements like (\ref{eq:ind}) will possess an undecidable validity (or satisfiability) problem under this universal interpretation of free variables. Note that this is importantly different from simply adding (e.g., universal) quantifiers to the language explicitly. See \cite{schaefer2023beyond} for a recent study of quantifier depth in the first-order theory of real numbers; it of course follows from Tarski's theorem that all such fragments remain decidable. Entailment facts like those above, in effect, involve several different types of universal quantification: they quantify over all possible values, \textit{for all possible ranges} of their variables simultaneously.

This result means in particular that the natural logical habitat for $do$-calculus---a logical language that generalizes the reasoning exhibited in Ex. \ref{example:front}, for instance---will be generally undecidable. At the same time, \cite{van2023hardness} have recently shown that probability-logical systems which assume a \emph{fixed} finite range for random variables remain decidable. Assuming there are no free variables at all, they were able to show that relatively expressive probability and ``Level 2'' causal-probability logics with summation operators are complete for the class $\text{succ}\exists\mathbb{R}$, whose prototypical problem is to decide whether a given Boolean circuit computes a satisfiable formula in the (first-order) existential theory of the real numbers. Another contribution of the present work (Theorem \ref{theorem: succETR completeness} below) is to show that this characterization extends to the full ``Level 3'' fragment encompassing all possible statements over probabilities of counterfactuals, thereby resolving a problem left open by  \cite{van2023hardness} and \cite{mosse2022causal}. The same theorem further extends these results to formulas with free variables.

Finally, in the last part of the paper we consider axiomatic questions for pure probabilistic (i.e., non-causal) logical languages with summation operators. Both the presence of free variables (with universal interpretation) and the use of summation present interesting logical challenges. 
A first observation is that the usual statement of the deduction theorem fails in this setting,  due to the interpretation of free variables.
\begin{remark} It does not generally hold that if $\varphi \vDash \psi$ then $\vDash \varphi \rightarrow \psi$. For instance, while $\probsymbol(x \land y) \approx \probsymbol(x)\cdot\probsymbol(y) \vDash \probsymbol(x'\land y') \approx \probsymbol(x')\cdot\probsymbol(y')$ since the ``conclusion'' merely relabels the free $x, y \mapsto x', y'$, it does not follow that $\vDash \probsymbol(x\land y) \approx \probsymbol(x)\cdot\probsymbol(y) \rightarrow \probsymbol(x' \land y') \approx \probsymbol(x')\cdot\probsymbol(y')$ for every $x, y, x', y'$.
\label{rmk:deductionfail}
\end{remark}
The laws governing summation can also be subtle, particularly when value ranges are allowed to be arbitrarily large, or even infinite. For instance, infinite sums over a probability term can be approximate from below by finite sums over increasing substitutions with constants. As is typical for probability logics, the systems we study here are not compact, so we cannot hope to obtain a strong completeness theorem with only finitary axioms. Instead, we provide strong axiomatizations with infinitary rules, building on previous work in probability logic. Enforcing the ``closed-world'' assumption that every element of the range is named by a constant, we prove strong completeness for several systems, including those with and without allowance for free variables. We close with further limitative results, as well as additional open questions.

\section{Preliminaries}\label{section: preliminaries}

This section lays out the syntax, semantics, and relevant notions from complexity for the probabilistic language $\mathcal{L}_{\text{prob}}$ and the probabilistic causal counterfactual language $\mathcal{L}_{\text{causal}}$.


\subsection{Syntax}



\subsubsection{Signatures}
Let $\mathbf{V}$ be a countable set of \emph{random variables}. For each $V \in \mathbf{V}$, let $\mathcal{C}_V$ be a countable set of \emph{constant symbols} (or constants) and let $\mathcal{V}_V$ be a countably infinite set of \emph{range variables}.
All these sets must be disjoint for distinct random variables $V \neq V' \in \mathbf{V}$, and we will often enumerate their elements explicitly. For range variables we write $\mathcal{V}_V = \{ v_1, v_2, v_3, \dots \}$
and often denote generic range variables $v, v'$, etc.~with such standing for distinct elements of $\mathcal{V}_V$.
As for constants, we write $\mathcal{C}_V = \{c^V_1, c^V_2, \dots\}$ in the case of infinitely many constants and $\mathcal{C}_V = \{c^V_1, \dots, c^V_N \}$ otherwise. Thus, for constant symbols, superscripts indicate the random variable, but we often write $c, c',$ etc.~for elements of $\mathcal{C}_V$ when context makes it clear which $V$.
Let $\mathcal{D}_V = \mathcal{C}_V \cup \mathcal{V}_V$, for each $V \in \mathbf{V}$. Elements of $\mathcal{D}_V$ are naturally denoted $d, d',$ etc.

\subsubsection{Languages}\label{sec:languages:infinite}
In this section, we build up our two languages, $\mathcal{L}_{\mathrm{prob}}$ and $\mathcal{L}_{\mathrm{causal}}$, comparing polynomials in probabilities of events, as well as some fragments thereof. We start with base atoms $V = v$, where $V$ ranges over random variables $\mathbf{V}$ and $v$ over variable symbols $\mathcal{V}_V$, as well as atoms $V = c^V_i$, for each $c^V_i \in \mathcal{C}_V$. For the remainder of this section, we assume there are infinitely many constant symbols in each $\mathcal{C}_V$. Then some base (deterministic) languages are:
\begin{align*}
    \mathcal{L}_{\mathrm{base}}^{\mathrm{prob}} &:= V=v \; | \; V = c^V_i \; | \; \lnot \mathcal{L}_{\mathrm{base}}^{\mathrm{prob}} \; | \; \mathcal{L}_{\mathrm{base}}^{\mathrm{prob}}\land \mathcal{L}_{\mathrm{base}}^{\mathrm{prob}}, \\
    \mathcal{L}_{\mathrm{int}} &:= \top \;|\; V=v \; | \; V = c^V_i \;| \;\mathcal{L}_{\mathrm{int}} \land \mathcal{L}_{\mathrm{int}}.
\end{align*}
Define the base causal conditional language as follows:
\begin{align*}    \mathcal{L}_{\mathrm{base}}^{\mathrm{causal}} &:= [\mathcal{L}_{\mathrm{int}}] \mathcal{L}_{\mathrm{base}}^{\mathrm{prob}}  \; | \; \neg \mathcal{L}_{\mathrm{base}}^{\mathrm{causal}}  \; | \; \mathcal{L}_{\mathrm{base}}^{\mathrm{causal}}  \land \mathcal{L}_{\mathrm{base}}^{\mathrm{causal}}. 
\end{align*}
Thus, instead of writing, e.g., $Y_{X=x}=y$ or its abbreviation $y_x$ for the interventional conditional, as in Examples \ref{example:late } and \ref{example:front}, we will adopt dynamic logical notation and write $[X=x]Y=y$, or abbreviatedly, $[x]y$.
 
Probability terms are defined as follows for $* \in \{\text{prob}, \text{causal}\}$ and any $c^V_i \in \mathcal{C}_V$, $v_i \in \mathcal{V}_V$: 
\begin{align}
    \mathsf{t}^*_{\circ} &:=  \probsymbol\big(\mathcal{L}_{\mathrm{base}}^* \;|\; \mathcal{L}_{\mathrm{base}}^*\big) \; | \; \sanssum_{v_i} \mathsf{t}^*_{\circ} \; | \; \mathsf{t}^*_{\circ} + \mathsf{t}^*_{\circ} \; | \; \mathsf{t}^*_{\circ} \cdot \mathsf{t}^*_{\circ}\label{eqn:circterms}\\ 
    \mathsf{t}^* &:= \probsymbol\big(\mathcal{L}_{\mathrm{base}}^* \;|\; \mathcal{L}_{\mathrm{base}}^*\big) \; | \; \sanssum_{v_i} \mathsf{t}^* \; | \; \mathsf{t}^* + \mathsf{t}^* \; |  \;\mathsf{t}^* \cdot \mathsf{t}^*\; | \;  - \mathsf{t}^* \; |\;\mathsf{c}^V_i \;|\; \mathsf{v}_i\;\label{eqn:terms}
\end{align}


\noindent
The probability primitives are thus the conditional probabilities of base expressions.
Now, define languages comparing probability terms and equality of symbols for $* \in \{\text{prob}, \text{causal}\}$:
\begin{align}
    \mathcal{L}_* &:= \mathcal{D}_V \equiv \mathcal{D}_V \;|\; \mathsf{t}^* \succsim \mathsf{t}^* \; | 
    \; \lnot \mathcal{L} \; | \; \mathcal{L} \land \mathcal{L}. \label{eqn:recursivefinallanguage}
\end{align}
Thus, e.g., \eqref{eqn:lateidentification} is in $\mathcal{L}_{\text{causal}}$, with $y \not\equiv y'$ representing disequality of $y, y' \in \mathcal{V}_Y$.
We specify some fragments of these languages for $* \in \{\text{prob}, \text{causal}\}$:
\begin{itemize}
    \item 
    $\mathcal{L}_*^{\text{closed}}$ is the \textit{closed fragment} of $\mathcal{L}_*$ containing no free variables, i.e. in which every range variable $v$ is bound by a summation operator $\sanssum_{v}$.
    \item 
    $\mathcal{L}_*^{\circ}$ is the fragment of $\mathcal{L}_*$ with \textit{restricted coefficients}, formed by restricting the terms $\mathsf{t}^*$ in the second case of \eqref{eqn:recursivefinallanguage} to the $\mathsf{t}_\circ^*$ defined by \eqref{eqn:circterms}, i.e. those in which all coefficients are $1$ and not $-1$, $\mathsf{c}_i^V$, or $\mathsf{v}_i$.
    \item 
    $\mathcal{L}_*^{\circ+\text{closed}}$ is their intersection.
\end{itemize}
In sum, the targets of our analysis are the eight languages $\mathcal{L}^\dagger_*$ formed for each $* \in \{\text{prob}, \text{causal}\}$ and $\dagger \in \{\varepsilon, \text{closed}, \circ,  \circ +\text{closed}\}$.\footnote{Here $\varepsilon$ is the empty string.}

\subsubsection{Bounded Languages}\label{sec:languages:bounded}
As we alluded to above, we also consider languages whose signatures have finitely many constants, in which for some $N \ge 1$ we have
$\mathcal{C}_V = \{c_1^V, \dots, c_N^V\}$ for each $V$ (note here that $N$ is uniform over all $V$).
Where $\mathcal{L} = \mathcal{L}^\dagger_*$ represents any of the eight languages defined in \S{}\ref{sec:languages:infinite}, let $\mathcal{L}(N)$ be that constructed analogously to $\mathcal{L}$ but over a signature built from this bounded constant set.
We can equivalently consider $\mathcal{L}(N)$ to be a fragment of $\mathcal{L}$ containing all and only those $\varphi$ such that an atom  $V = c^V_i$ appears in $\varphi$ only when $i \le N$.

\subsubsection{Abbreviations and expressibility}
Given some $\delta \in \mathcal{L}_{\mathrm{base}}^{*}$ we use $\probsymbol(\delta)$ as shorthand for $\probsymbol(\delta \;|\; \top)$.
Given a term $\mathsf{t}$, $V \in \mathbf{V}$, $v \in \mathcal{V}_V$, and $\epsilon \in \mathcal{L}_{\mathrm{base}}^{\mathrm{prob}}$ we use the notation $\mathsf{t}[V = v / \epsilon ]$ to denote $\mathsf{t}$ but with the base formula $\epsilon$ substituted in place of every free occurrence of $V  = v$ (as an atom within a probability operator $\probsymbol$); given a formula $\varphi \in \mathcal{L}_*$ we write $\varphi[v / d]$, to denote $\varphi$ except with every free occurrence of $v$ (both across equalities $\equiv$ and as atoms $V = v$ in probability terms) replaced by $d \in \mathcal{D}_V$.
Informal sums with serifs $\sum$ (as opposed to formal sans-serif sums $\sanssum$) are used along with dummy variables as ``macros'' to abbreviate polynomial terms, e.g. $\sum_{i = 1}^2 \probsymbol(V = c^V_i)$ is shorthand for $\probsymbol(V = c^V_1) + \probsymbol(V = c^V_2)$.

Other comparisons of terms are readily expressible in terms of our one, e.g., $\mathsf{t}_1 \precsim \mathsf{t}_2 \Leftrightarrow \mathsf{t}_2 \succsim \mathsf{t}_1$ or $\mathsf{t}_1 \prec \mathsf{t}_2 \Leftrightarrow \mathsf{t}_2 \succsim \mathsf{t}_1 \land \lnot(\mathsf{t}_1 \succsim \mathsf{t}_2)$.  We write $\mathsf{t}_1 \approx \mathsf{t}_2$ for $\mathsf{t}_1 \succsim \mathsf{t}_2 \land \mathsf{t}_1 \precsim \mathsf{t}_2$.
Subtraction is expressible: we write $\mathbf{t}_1 - \mathbf{t}_2$ for $\mathbf{t}_1 + (- \mathbf{t}_2)$.
Ratios are also expressible: 
$\mathsf{t}_1 / \mathsf{t}_2 \succsim \mathsf{t}_3 \Leftrightarrow \mathsf{t}_1 \succsim \mathsf{t}_2 \cdot \mathsf{t}_3$. In sums of ratios, etc., the latter pattern generalizes via clearing common denominators.
The only case in which it fails is when the bound variable of a sum occurs in a denominator, as in $\sanssum_v \mathsf{t}_1/\probsymbol(V = v) \succsim \mathsf{t}_2$, since the range of a formal summation is unbounded in our semantics below.
Thus---being careful to eschew syntactically this single exception---we may avail ourselves of 
division as well.

Finally, although the above language does not include numerical constants, it is natural to define a term $\underline{1}$ for $\probsymbol(\top)$ and $\underline{0}$ for $\probsymbol(\bot)$.
For any $q \in \mathbb{Q}$ we can use $\underline{q}$ as a term freely, unpacking it by applying the rules above to $\underline{1}$s.
For example, $\mathsf{t} \succsim \underline{1/2}$ is short for $\mathsf{t} \cdot \big[\probsymbol(\top) + \probsymbol(\top)\big] \succsim \probsymbol(\top)$.

\subsection{Semantics}

\subsubsection{Structural causal models}

The semantics for all of these languages will be defined relative to  \emph{structural causal models}, which can be understood as a very general framework for encoding \emph{data-generating processes}. In addition to the endogenous variables $\mathbf{V}$, structural causal models also employ \emph{exogenous variables} $\mathbf{U}$ as a source of random variation among endogenous settings. For extended introductions, see, e.g., \cite{Pearl2009,bareinboim2022pearl}.


\begin{definition}
A \textit{structural causal model} (SCM) $\mathfrak{M}$ is a tuple $\mathfrak{M} = (\mathcal{F}, \prob, \mathbf{U}, \mathbf{V})$, with: \begin{enumerate}[label=(\alph*)]
    \item $\mathbf{V}$ a set of \textit{endogenous variables}, with each $V \in \mathbf{V}$ taking on possible values from a range $\text{Val}(V)$,
    \item $\mathbf{U}$ a set of \textit{exogenous variables}, with each $U \in \mathbf{U}$ taking on possible values $\text{Val}(U)$,
    \item $\mathcal{F} =\{f_V\}_{V \in \textbf{V}}$ a family of \emph{structural functions}, such that $f_V$ determines the value of $V$ given the values of the exogenous variables $\mathbf{U}$ and those of the other endogenous variables $V' \in \mathbf{V}\setminus \{V\}$, and
    \item $\prob$ a probability measure on a $\sigma$-algebra $\sigma(\mathbf{U})$ on $\mathbf{U}$.
\end{enumerate}

\end{definition}

In addition, we adopt the common assumption that our SCMs are \emph{recursive} or acyclic.
Where $\mathbf{S}$ is a set of variables let $\Val(\mathbf{S}) = \bigtimes_{S \in \mathbf{S}} \Val(S)$ be their joint range.
\begin{definition}
A SCM $\mathfrak{M}$ is {recursive} if there is a well-order $\prec$ on $\mathbf{V}$ such that $\mathcal{F}$ respects $\prec$ in the following sense: for any $V \in \mathbf{V}$, whenever the joint ranges $\mathbf{v}_1,\mathbf{v}_2\in \Val(\mathbf{V})$ have the property that $\pi_{V'}(\mathbf{v}_1) = \pi_{V'}(\mathbf{v}_2)$\footnote{Here $\pi_{V'}: \Val(\mathbf{V}) \to \Val(V')$ is the projection map from joint to single ranges, taking $\{v_V\}_{V \in \mathbf{V}}\mapsto v_{V'}$.} for all $V' \prec V$, we are guaranteed that $f_V(\mathbf{v}_1,\mathbf{u}) = f_V(\mathbf{v}_2,\mathbf{u})$ for every $\mathbf{u} \in\Val(\mathbf{U})$.
\end{definition}

Causal interventions represent the result of a \emph{manipulation} to the causal system, and  are defined in the standard way (e.g., \citealt{spirtes2000causation, Pearl2009}):
\begin{definition}
An \textit{intervention} \textbf{X}=\textbf{x} assigns a set of random variables $\textbf{X}\subset \mathbf{V}$ to a set of values $\textbf{x} \in \Val(\mathbf{X})$. An intervention induces a mapping of a system of functions $\mathcal{F} = \{f_V\}_{V \in \textbf{V}}$ to another system $\mathcal{F}_{\textbf{X=x}}$, which is identical to $\mathcal{F}$, but with $f_V$ replaced by the constant function $f_X(\cdot) = \pi_X(\mathbf{x})$ for each $X \in \textbf{X}$. Similarly, where $\mathfrak{M}$ is a model with equations $\mathcal{F}$, we write $\mathfrak{M}_{\textbf{X=\textbf{x}}}$ for the model which is identical to $\mathfrak{M}$ but with the equations $\mathcal{F}_{\textbf{X}=\textbf{x}}$ in place of $\mathcal{F}$.
\end{definition}

In order to guarantee that interventions lead to a well-defined semantics, we work with structural causal models which are measurable:

\begin{definition}
A SCM $\mathfrak{M}$ is \textit{measurable} if under every finite intervention $\mathbf{X}=\mathbf{x}$, the joint distribution $\prob(\mathbf{V})$ associated with the model $\mathfrak{M}_{\mathbf{X}=\mathbf{x}}$ is well-defined. This, in turn, defines a probability $\mathbb{P}_\mathfrak{M}(\delta)$ for $\delta \in \mathcal{L}^*_{\text{base}}$. (For details, see §2.2.2 of \citealt{mosse2022causal}.)
\end{definition}

Lastly, we restrict attention to models in which variables take values in the natural numbers:

\begin{definition}
    A SCM is \textit{countable} if the range $\Val(V)$ of any variable $V$ is a subset of $\mathbb{Z}^+ = \{1, 2, 3, \dots\}$.
\end{definition}

\subsubsection{Interpretation of terms and truth definitions}

\begin{definition}
    A \emph{model} $\mathfrak{M}$ is a recursive, measurable, countable SCM together with an assignment $c^{\mathfrak{M}} \in \Val(V)$ for each constant symbol $c \in \mathcal{C}_V$. We restrict attention to models $\mathfrak{M}$ that are \textit{closed-world} or ``Herbrand,'' such that for every $V \in \mathbf{V}$, the map $\mathcal{C}_V \to \Val(V)$ taking $c \mapsto c^{\mathfrak{M}}$ is surjective. 

\end{definition}

\begin{definition}
    A (range) variable assignment for $V \in \mathbf{V}$ is a partial mapping $\iota : \mathbf{V}\times\mathbb{Z}^+ \to \mathrm{Val}(V)$, neither surjective nor injective in general. The value $\iota(V, i)$ is the interpretation of the range variable $v_i$.
\end{definition}

%


\begin{definition} The relation $\mathfrak{M},\iota \vDash \varphi$ for $\varphi \in \mathcal{L}_{\text{causal}}$ is defined as follows.
Numerical denotations for terms \eqref{eqn:terms} are elements of $\mathbb{R} \cup \{ \infty, -\infty, \bot\}$.
We define $\semantics{\probsymbol(\delta \mid \delta')}^{\mathfrak{M}}_\iota = \mathbb{P}_\mathfrak{M}(\delta\land \delta')/\mathbb{P}_{\mathfrak{M}}(\delta')$ provided $\mathbb{P}_{\mathfrak{M}}(\delta') \neq 0$, and 
$\semantics{\probsymbol(\delta \mid \delta')}^{\mathfrak{M}}_\iota = \bot$ otherwise.
We also define $\semantics{\mathsf{c}^V_i}^{\mathfrak{M}}_\iota = ({c}^V_i)^{\mathfrak{M}}$ and $\semantics{\mathsf{v}_i}^{\mathfrak{M}}_\iota = \iota(V, i)$; this works since these are values in $\Val(V) \subset \mathbb{Z}^+\subset \mathbb{R}$.
As expected, we define $\semantics{\mathsf{t} \oplus \mathsf{t}'}^{\mathfrak{M}}_\iota = \semantics{\mathsf{t}}^{\mathfrak{M}}_\iota \oplus \semantics{\mathsf{t}'}^{\mathfrak{M}}_\iota$ for each $\oplus \in \{ \cdot, + \}$ and $\semantics{-\mathsf{t}}^{\mathfrak{M}}_\iota = -\semantics{\mathsf{t}}^{\mathfrak{M}}_\iota$.
Binary operations on our doubly-extended line are defined as expected; also
\begin{align*}
0 \cdot \infty = \infty \cdot 0 = 0\cdot (-\infty) = (-\infty) \cdot 0 = 0,\quad \infty + (-\infty) = (-\infty) + \infty = 0
\end{align*}
and any operation with $\bot$ as one of the operands yields $\bot$.
Now we define\footnote{In this definition, $\iota[V, i \mapsto n]$, for $V \in \mathbf{V}$, $i \in \mathbb{Z}^+$, $n \in \Val(V)$, denotes the map $\iota'$ such that $\mathrm{dom}(\iota') = \mathrm{dom}(\iota) \cup \{ (V, i) \}$ and $\iota'(V, i) = n$ but $\iota'(V', j) = \iota(V', j)$ for any $(V', j) \in \mathrm{dom}(\iota)$ such that $(V', j) \neq (V, i)$.}
\begin{align*}
\semantics{\sanssum_{v_i} \mathsf{t}}^{\mathfrak{M}}_\iota = \sum_{n \in \Val(V)} \semantics{\mathsf{t}}^{\mathfrak{M}}_{\iota[V, i \mapsto n]}
\end{align*}
with this sum taking the expected extended value if divergent (and $\bot$ if any summand is $\bot$). Note that the syntax of terms $\mathsf{t}$ guarantees that any divergent sum will tend toward $\infty$ or $-\infty$.

Now we define the semantics of formulas \eqref{eqn:recursivefinallanguage} as follows.
First, $\mathfrak{M}, \iota \vDash d \equiv d'$ for $d, d' \in \mathcal{D}_V$ iff $\semantics{d}^{\mathfrak{M}}_\iota = \semantics{d'}^{\mathfrak{M}}_\iota$,
where the denotation in ranges $\Val(V)$ of elements of $\mathcal{D}_V$ for each $V \in \mathbf{V}$ are given in the obvious way:  $\semantics{c^V_i}^{\mathfrak{M}}_{\iota} = (c^V_i)^{\mathfrak{M}}$ and $\semantics{v_i}^{\mathfrak{M}}_{\iota} = \iota(V, i)$.
Then, 
    $\mathfrak{M}, \iota \vDash \mathsf{t} \succsim \mathsf{t}'$ iff $\semantics{\mathsf{t}}^{\mathfrak{M}}_\iota \ge \semantics{\mathsf{t}'}^{\mathfrak{M}}_\iota$, with neither $\semantics{\mathsf{t}}^{\mathfrak{M}}_\iota$ nor $\semantics{\mathsf{t}'}^{\mathfrak{M}}_\iota$ being $\bot$, and the order on the doubly-extended real line being defined as expected.   
Next, $\mathfrak{M},\iota \vDash  \neg \varphi $ iff $\mathfrak{M},\iota \not\vDash  \varphi$.\footnote{Thus, we have $\mathfrak{M},\iota \vDash  \neg \varphi $ if $\varphi$ contains {any} term evaluating to $\bot$, i.e., any probability conditional on a measure zero.}
    Finally, $\mathfrak{M},\iota \vDash  \varphi \land \psi$ iff $\mathfrak{M},\iota \vDash  \varphi$ and $\mathfrak{M},\iota \vDash  \psi$.

\end{definition}

\begin{definition}[Entailment] \label{def:entailment} We say that $\varphi$ is valid in a model $\mathfrak{M}$, and write $\mathfrak{M}\vDash \varphi$, if $\mathfrak{M},\iota \vDash \varphi$ for every $\iota$.
A model $\mathfrak{M}$ satisfies a \emph{sequent} $\Gamma \Rightarrow \varphi$ if either $\mathfrak{M}\not\vDash \gamma$ for some $\gamma \in \Gamma$, or $\mathfrak{M} \vDash \varphi$.
We say that a set $\Gamma$ of formulas entails a formula $\varphi$, and write $\Gamma \vDash \varphi$, when $\mathfrak{M}$ satisfies $\Gamma \Rightarrow \varphi$ for every $\mathfrak{M}$ (in some class $\mathcal{M}'$).
\end{definition}

\begin{definition}\label{def:models}
    Let $\mathcal{M}$ be the class of (recursive, measurable, countable, closed-world) models.
    Let $\mathcal{M}_N \subset \mathcal{M}$ be the subclass in which each variable $V$ has a range of cardinality $N$.
Let $\mathcal{M}_{\text{fin}} = \bigcup_{N=1}^\infty \mathcal{M}_N$ be the subclass of those in which all variables have a finite range.
\end{definition}

\begin{definition}[Satisfiability] 
    $\mathsf{SAT}_{\mathcal{L}_*^\dagger}$ is the problem of deciding whether a given sequent $\Gamma \Rightarrow \varphi$, where $\varphi \in \mathcal{L}_*^\dagger$ and $\Gamma \subset \mathcal{L}_*^\dagger$ is finite, has a model satisfying it within a fixed class $\mathcal{M}'$.
\end{definition}
\citet{van2023hardness} restrict attention to $\mathsf{SAT}_{\mathcal{L}_*^{\text{closed}}(N)}$\footnote{Strictly speaking, they consider satisfiability for the fragment of $\mathcal{L}_*^{\text{closed}}(N)$ in which the primitives $\mathsf{c}^V_i$ and $\mathsf{v}_i$ in \eqref{eqn:terms} are disallowed, but this of course only means our complexity results in \S{}\ref{section: complexity} are stronger.} over $\mathcal{M}_N$.

\subsubsection{Examples for separation in expressive power}


In this section, we show that the distinctions among languages and classes of models just introduced track gaps in expressive power.

\paragraph{Free and bound range variables.} Consider the following formula with a free range variable:
\begin{align*}
    \probsymbol(V=v_i) \succ \underline{0}.
\end{align*}
This formula $\varphi$ is satisfied only by models which assign positive probability to $V=n$ for every value $n \in \text{Val}(V)$. However, no formula $\psi$ with only bound range variables picks out precisely such models.

\paragraph{Coefficients.}
Unlike $\mathcal{L}_{\text{prob}}^{\circ+\text{closed}}$, the language $\mathcal{L}_{\text{prob}}^{\text{closed}}$ allows the use of constants, as in the following formula, where $i$ is shorthand for the $i$-term sum $\mathsf{P}(\top) + \dots +\mathsf{P}(\top) $:
\[
\bigwedge_{1 \le i \le n} c^V_i \approx i \land  \sanssum_{v_i} \mathsf{P}(\top) \approx n 
\]
Indeed, this formula $\varphi$ is satisfied only by models which assign $\text{Val}(V) = \{1, \dots, n\}$. But no formula $\psi \in \mathcal{L}_{\text{prob}}^{\text{closed}}$ is true of the same models as $\varphi$, since for any model $\mathfrak{M}$ of $\psi$, we can create another model $\mathfrak{M}^\prime$ where $\Val(V) = \{2, \dots, n+1\}$, associating the event $V = i$ in $\mathfrak{M}$ with $V = i+1$ in $\mathfrak{M}'$, without sacrificing the satisfaction of $\psi$.

\paragraph{Probabilistic and causal languages.} To illustrate the expressivity of causal as opposed to purely probabilistic languages, we recall a variation by \cite{bareinboim2022pearl} on an example due to \cite{Pearl2009}:

\begin{example}[Causation without correlation]\label{ex-cht}
Let $\mathfrak{M}_1 = (\mathcal{F}, \prob, \mathbf{U}, \mathbf{V})$, where $\textbf{U}$ contains two binary variables $U_1,U_2$ such that $\prob(U_1) = \prob(U_2) =\nicefrac{1}{2}$, and $\textbf{V}$ contains two variables $V_1, V_2$ such that $f_{V_1} = U_1$ and $f_{V_2} = U_2$. Then $V_1$ and $V_2$ are independent. But having observed this, one could not conclude that $V_1$ has no causal effect on $V_2$; consider the model $\mathfrak{M}^\prime$, which is like $\mathfrak{M}$, except with the mechanisms:
\begin{align*}
    f_{V_1} &= \mathbf{1}_{U_1 = U_2}\\
    f_{V_2} &= U_1 + \mathbf{1}_{V_1=1, U_1 = 0, U_2=1}.
\end{align*} 
In this case $\prob_{\mathfrak{M}}(V_1,V_2) = \prob_{\mathfrak{M}^\prime}(V_1,V_2)$, so that the models are indistinguishable in $\mathcal{L}_{\mathrm{prob}}$. However, the models are distinguishable in $\mathcal{L}_{\mathrm{causal}}$ (given appropriate constant interpretations), via the statement
\begin{align*}
\prob\big([V_1 = 1] V_2 = 1\big) = \prob\big( [V_1 = 1] V_2 = 0\big)
\end{align*}
\end{example}

As shown in \cite{bareinboim2022pearl} (cf. also \citealt{suppes:zan81,ibeling2021topological}), the pattern in Example \ref{ex-cht} is universal: for any model $\mathfrak{M}$ it is \emph{always} possible to find some $\mathfrak{M}^\prime$ that agrees with $\mathfrak{M}$ on all of $\mathcal{L}_{\mathrm{prob}}$ but disagrees on $\mathcal{L}_{\mathrm{causal}}$.

\subsection{Complexity}

In this section, we introduce the target complexity class for our completeness result, namely $\text{succ}\exists\mathbb{R}$ \citep{van2023hardness}. Recall that ETR is the existential fragment of first-order logic, that is, the set of first-order expressions of the form $\exists x_1, \dots,  x_n \;\alpha$, such that $\alpha$ is a quantifier-free. The class $\text{succ}\exists\mathbb{R}$ is defined by the problem of determining whether a circuit computes an encoding of a satisfiable ETR instance:

\begin{definition}[Trees for ETR formulas]
    Represent an ETR formula $\varphi$ as a tree, where the leaf nodes are labeled with the variables of $\varphi$ and the non-leaf nodes are labeled with the operations of $\varphi$, whose children are their operands; we will call this an ETR tree. Fix some encoding on the locations of nodes in the tree, such that the binary string for a node $v$ specifies its location in the tree. We will abuse notation, using $v$ to refer to the encoding of $v$'s location, and using $\varphi$ to refer to the associated ETR tree.
\end{definition}

\begin{definition}\label{def:circuits}
    A Boolean circuit is a directed acyclic graph $G=(V, E)$ such that
    \begin{itemize}
        \item 
        $N$ vertices, \textit{the inputs}, have no predecesors, and $M$ vertices, \textit{the outputs}, have no successors;
        \item 
        each non-input node is labeled with $\land$, $\lor$, or $\neg$, and has the correct number of predecessors in $G$; and
        \item 
        the edges $E$ are ordered, to distinguish different arguments to the same Boolean expression.
    \end{itemize}
\end{definition}

\begin{definition}\label{def:succETR}
    A Boolean circuit $C:\{0,1\}^N \rightarrow \{0,1\}^M$ \textit{computes} $\varphi$ if $C(v) = (\ell_v, \text{parent}_v, \text{children}_v)$ contains the label for $v$ as well as the locations of its parent and children. 

\end{definition}

\begin{definition}
    $\text{succETR}$ is the problem of deciding whether a circuit $C: \{0,1\}^N \rightarrow \{0,1\}^M$ computes a satisfiable ETR instance.
\end{definition}

\begin{definition}
    $\text{succ}\exists\mathbb{R}$ is the class of problems that are reducible in polynomial time to $\text{succETR}$.
\end{definition}

\section{Complexity of Satisfiability}\label{section: complexity}

The introduction observed a negative result:
\begin{theorem}\label{theorem: not RE}
$\mathsf{SAT}_{\mathcal{L}_{\text{prob}}}$ and $\mathsf{SAT}_{\mathcal{L}_{\text{causal}}}$ over either $\mathcal{M}_{\text{fin}}$ or $\mathcal{M}$ are undecidable.
\end{theorem}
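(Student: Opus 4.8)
The plan is to reduce from the (complement of the) conditional-independence implication problem shown undecidable by \citet{li2023undecidability} (Thm.~\ref{theorem: conditional independence undecidable}), using the encoding of CI statements already sketched around \eqref{eq:ind}. Since $\mathcal{L}_{\text{prob}}$ embeds into $\mathcal{L}_{\text{causal}}$ (a plain probability term $\probsymbol(\delta)$ is the causal term $\probsymbol([\top]\delta)$ under the trivial intervention), it suffices to treat $\mathcal{L}_{\text{prob}}$; the very same sequents then witness undecidability of $\mathsf{SAT}_{\mathcal{L}_{\text{causal}}}$.

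Given a Li instance $\bigwedge_{i=1}^\ell C_i \to C_0$ over variables $X_1,\dots,X_k$, I encode each $C_i = \mathbf{U}_i\perp\mathbf{V}_i\mid\mathbf{W}_i$ by the division-free factorization formula $p_i := \probsymbol(\mathbf{u}_i\wedge\mathbf{v}_i\wedge\mathbf{w}_i)\cdot\probsymbol(\mathbf{w}_i)\approx\probsymbol(\mathbf{u}_i\wedge\mathbf{w}_i)\cdot\probsymbol(\mathbf{v}_i\wedge\mathbf{w}_i)$, rather than the conditional form \eqref{eq:ind}, so that the statement behaves correctly (both sides vanishing) on value combinations of measure zero and never denotes $\bot$. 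The crucial asymmetry lies in how value ranges are quantified. In the premises $p_1,\dots,p_\ell$ all range variables are left free, so that---by the universal reading of free variables together with the closed-world assumption, under which $\iota$ sweeps out exactly $\Val(V)$---a model validates $p_i$ (i.e.\ $\mathfrak{M}\vDash p_i$) if and only if the full independence $C_i$ holds of $\prob_{\mathfrak{M}}$. For the conclusion I instead substitute a tuple of \emph{fresh constants} $\mathbf{c}$ for the range variables of $C_0$ and negate, forming $\neg p_0[\mathbf{v}/\mathbf{c}]$: this closed formula asserts that $C_0$ is violated at the single value combination named by $\mathbf{c}$, and since a model is free to interpret $\mathbf{c}$ as any values, the existence of a model validating it corresponds to $C_0$ failing \emph{somewhere}. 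The reduction outputs the sequent $\emptyset\Rightarrow\Phi$ with empty premise set and $\Phi := p_1\wedge\dots\wedge p_\ell\wedge\neg p_0[\mathbf{v}/\mathbf{c}]$, so that $\mathsf{SAT}$ of this sequent asks exactly whether some model validates every $p_i$ while refuting $C_0$ at some values.

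To close the reduction I must match the two model classes to the two distribution classes of Thm.~\ref{theorem: conditional independence undecidable}. Any discrete joint distribution on $X_1,\dots,X_k$ is realized by a recursive, measurable, countable, closed-world SCM---e.g.\ one whose exogenous variables carry the distribution and whose structural functions read off coordinates---so satisfiability over $\mathcal{M}$ corresponds to failure of the implication over all discrete distributions. For $\mathcal{M}_{\text{fin}}=\bigcup_N\mathcal{M}_N$, in which all variables share a common range cardinality $N$, I zero-pad a finite-support distribution to uniform size; because in the factorization form every term at a padded (probability-zero) value equals zero, each $C_i$ and the violation of $C_0$ are preserved, so satisfiability over $\mathcal{M}_{\text{fin}}$ corresponds to failure over all finite-support distributions. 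Hence $\Phi$ is satisfiable over $\mathcal{M}$ (resp.\ $\mathcal{M}_{\text{fin}}$) iff the Li implication fails for some discrete (resp.\ finite-support) distribution; as both implication problems are undecidable, so are both satisfiability problems, and likewise for $\mathcal{L}_{\text{causal}}$.

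I expect the main obstacle to be getting the quantifier structure exactly right and verifying it: ensuring that free range variables in the premises genuinely capture the universal ``for all values'' content of each $C_i$ (which relies essentially on the closed-world/Herbrand condition, so that $\forall\iota$ ranges over the entire value set), while the negated, constant-substituted conclusion captures the \emph{existential} ``at some values'' content of a violation of $C_0$. Making these two readings coexist within a single satisfiability instance---together with the care needed around conditioning on measure-zero events (handled by the division-free encoding) and the uniform-cardinality padding required for $\mathcal{M}_{\text{fin}}$---is the delicate part; by contrast, the realizability of arbitrary discrete distributions by SCMs is routine.
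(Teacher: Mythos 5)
Your proposal is correct, and at its core it is the same reduction the paper intends: invoke Theorem~\ref{theorem: conditional independence undecidable} of \citet{li2023undecidability}, encode each conditional independence as a probability identity whose range variables are left free so that the universal reading of free variables captures ``for all values,'' and pass to $\mathcal{L}_{\text{causal}}$ via the trivial intervention $[\top]$. Where you go beyond the paper's (very terse) argument is in two details, and both are genuine improvements rather than a different route. First, the paper encodes CI in the conditional form \eqref{eq:ind}; under the official semantics that formula is \emph{false} at any assignment where the conditioning event has measure zero (the terms denote $\bot$), so validating it requires positivity of all the relevant $\mathbf{W}_i$-marginals in addition to the independence itself---a mismatch with Li's problem that your division-free factorization encoding removes, and which is also exactly what makes your zero-padding step for $\mathcal{M}_{\text{fin}}$ (uniform range cardinality) sound. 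Second, the paper passes from undecidability of entailment to undecidability of $\mathsf{SAT}$ without comment, but with universally read free variables the two are not related by simple negation ($\mathfrak{M} \not\vDash p_0$ is strictly weaker than $\mathfrak{M} \vDash \neg p_0$); your fresh-constant substitution $\neg p_0[\mathbf{v}/\mathbf{c}]$, packaged as a single sequent with empty premise set, is precisely the missing step that produces an explicit $\mathsf{SAT}$ instance equivalent to failure of Li's implication. One small correction: the fact that $\iota$ sweeps out $\Val(V)$ is just the definition of a range-variable assignment, not a consequence of the closed-world condition; closed-worldness matters only in checking that your constructed models are legitimate---the fresh constants can be interpreted as the witnessing values while the map $\mathcal{C}_V \to \Val(V)$ remains surjective, which is unproblematic since each $\mathcal{C}_V$ is countably infinite.
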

This section shows the following positive results:
\begin{theorem}\label{theorem: succETR completeness}
    $\mathsf{SAT}_{\mathcal{L}_\text{causal}(N)}$ over $\mathcal{M}_{N}$ is complete for $\text{succ}\exists\mathbb{R}$.
\end{theorem}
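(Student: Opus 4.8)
The plan is to prove the two directions of completeness separately; hardness will be essentially immediate, while membership requires the real work.

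\textbf{Hardness.} Since the closed probabilistic fragment $\mathcal{L}_{\text{prob}}^{\text{closed}}(N)$ is literally a sublanguage of $\mathcal{L}_{\text{causal}}(N)$ (every closed probabilistic formula is a causal formula whose probability primitives carry empty interventions), any $\text{succ}\exists\mathbb{R}$-hardness reduction targeting the smaller language established by \cite{van2023hardness} is simultaneously a valid reduction into $\mathsf{SAT}_{\mathcal{L}_{\text{causal}}(N)}$. Hence $\text{succ}\exists\mathbb{R}$-hardness transfers with no additional argument, and the content of the theorem lies in the upper bound.

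\textbf{Membership.} I would reduce $\mathsf{SAT}_{\mathcal{L}_{\text{causal}}(N)}$ to $\text{succETR}$ by building, in polynomial time, a Boolean circuit computing an ETR tree that is equisatisfiable with the given sequent $\Gamma \Rightarrow \varphi$. The first step is a canonical-model representation in the style of \cite{mosse2022causal} and \cite{IBELING2023103339}: collect the finitely many interventions $\mathbf{x}_1,\dots,\mathbf{x}_m$ occurring as subscripts in $\Gamma \Rightarrow \varphi$ and the finitely many endogenous variables they mention. A model's behavior on the formula is then fixed by the joint distribution over \emph{outcome vectors} $\mathbf{o} \in \{1,\dots,N\}^{|\mathbf{V}|\cdot m}$ recording the value of each variable under each $\mathbf{x}_j$, where recursiveness corresponds to a conjunction of composition/consistency equalities among the coordinates of $\mathbf{o}$; crucially, the predicate ``$\mathbf{o}$ is realizable by a deterministic recursive model'' is decidable in time polynomial in $|\mathbf{o}|$. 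The ETR instance then introduces one real variable $p_{\mathbf{o}}$ per outcome vector, with constraints $p_{\mathbf{o}}\ge 0$, $p_{\mathbf{o}} = 0$ for non-realizable $\mathbf{o}$, and $\sum_{\mathbf{o}} p_{\mathbf{o}} = 1$. Each probability primitive $\probsymbol(\delta\mid\delta')$ becomes a ratio of sums of the $p_{\mathbf{o}}$ over the cells satisfying $\delta\wedge\delta'$ and $\delta'$ respectively; the formal summations $\sanssum_{v_i}$ expand into finite sums over the $N$ values of the bound variable; and the comparisons and Boolean structure translate into polynomial (in)equalities after clearing denominators, exactly as licensed by the expressibility of ratios discussed above.

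The free variables and the sequent semantics are then handled by a bounded-quantifier expansion. Because every range variable appearing takes a value in the fixed set $\{1,\dots,N\}$, the satisfaction clause ``$\mathfrak{M}\not\vDash\gamma$ for some $\gamma\in\Gamma$, or $\mathfrak{M}\vDash\varphi$'' unfolds into a finite (though exponentially large) Boolean combination: a disjunction, over $\gamma\in\Gamma$ and over assignments $\iota$, of the quantifier-free condition $\mathfrak{M},\iota\not\vDash\gamma$, disjoined with a conjunction over all assignments $\iota$ of $\mathfrak{M},\iota\vDash\varphi$. This introduces no genuine quantifier alternation: everything remains under the single block of existentially quantified reals $p_{\mathbf{o}}$, together with finitely many guesses for the constant interpretations $(c_i^V)^{\mathfrak{M}}$, which the closed-world assumption forces to be a bijection onto $\{1,\dots,N\}$. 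The remaining task is \emph{succinctness}: the resulting exponential ETR tree must be produced by a circuit of size polynomial in $|\Gamma\Rightarrow\varphi|$ and the length of the parameters. This holds because the tree is highly regular, consisting of a fixed arithmetic/Boolean skeleton of size $O(|\Gamma\Rightarrow\varphi|)$ whose blow-up comes entirely from uniform families indexed by the binary encodings of outcome vectors $\mathbf{o}$ and of range assignments $\iota$. Given the binary location of a node, the circuit decodes which index it corresponds to and outputs its label and the locations of its parent and children by a local computation (for instance, deciding whether $p_{\mathbf{o}}$ is forced to $0$ amounts to running the poly-time realizability predicate on the decoded $\mathbf{o}$).

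The principal difficulty I anticipate lies in the representation step for the full Level-3 language: one must show that the outcome-vector/composition encoding is both \emph{sound and complete} for $\mathcal{M}_N$, so that satisfiability over genuine recursive, measurable, countable, closed-world models coincides with feasibility of the ETR system. This is precisely where the advance beyond the Level-2 result of \cite{van2023hardness} is needed, since cross-world counterfactual probabilities such as $\probsymbol(y_{x^+}\wedge y'_{x^-})$ require the \emph{joint} law over several interventions rather than the individual interventional marginals. A secondary, more bookkeeping-oriented obstacle is keeping the succinct encoding genuinely local in the presence of two independent exponential indices (outcome vectors and range assignments), so that both the realizability predicate and the universal free-variable conjunction remain computable from a node's location within the polynomial size budget demanded by $\text{succETR}$.
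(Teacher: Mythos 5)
Your hardness direction and your treatment of free variables (expansion over all assignments, which is also how the paper handles them) are fine, but the membership direction has a fatal counting problem. Your ETR instance has one real variable $p_{\mathbf{o}}$ per joint outcome vector, where an outcome vector must record the value of every relevant variable under every intervention that matters for the formula. As you yourself note, Level-3 counterfactual probabilities require the \emph{joint} law across interventions; but after expanding the formal sums $\sanssum_{v_i}$, the interventions that matter include every constant instantiation of every intervention prefix---e.g.\ a term with $j$ bound range variables inside $[X_1 = x^{(1)}, \dots, X_j = x^{(j)}]$ produces $N^j$ distinct interventions---so the number $m$ of relevant interventions is exponential in $|\Gamma \Rightarrow \varphi|$, not polynomial as your phrase ``occurring as subscripts'' suggests. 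Consequently the outcome vectors range over $\{1,\dots,N\}^{|\mathbf{V}_\varphi|\cdot m}$ with $m = 2^{\Theta(|\varphi|)}$: there are \emph{doubly} exponentially many of them, and each needs exponentially many bits merely to be indexed. A succinct circuit has polynomially many input bits and hence can encode an ETR tree with at most $2^{\mathrm{poly}}$ nodes, so your instance cannot be computed by any polynomial-size circuit; the reduction collapses precisely at the step you dismiss as bookkeeping. A secondary gap: forcing $p_{\mathbf{o}} = 0$ for individually non-realizable $\mathbf{o}$ does not capture recursiveness, which is a \emph{global} property---all positive-probability outcome vectors must be compatible with one common well-order on the variables---so per-vector constraints are insufficient.

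The paper avoids both problems by not reducing to succETR directly. It invokes the characterization of Bl\"aser et al.\ that $\text{succ}\exists\mathbb{R}$ is exactly the class decided by nondeterministic \emph{exponential-time real RAMs}, proves an exponential model property (a satisfiable closed formula has a model whose support contains at most $2^{\mathrm{poly}(|\varphi|)}$ complete state descriptions, obtained by expanding all sums and citing the small-model property of Moss\'e et al.), and then lets the machine \emph{guess} that exponential-size support $\Delta'$ together with a variable ordering $\prec$, check joint compatibility of $\Delta'$ with $\prec$, rewrite each probability primitive as a probability over mutually exclusive propositional events indexed by the guessed support, and finally run a polynomial-time real RAM for ETR on the resulting exponential-size, sum-free probabilistic formula. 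The nondeterministic guess is what sidesteps the doubly exponential state space; without it (or without hand-compiling such a guess into exponentially many auxiliary $0/1$-constrained ETR variables plus an arithmetized satisfaction and ordering check---a construction you neither describe nor analyze), your direct circuit encoding cannot work.
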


\begin{theorem}\label{RE-Pi2}
    $\mathsf{SAT}_{\mathcal{L}_{\text{prob}}}$ and $\mathsf{SAT}_{\mathcal{L}_{\text{causal}}}$ over $\mathcal{M}_{\text{fin}}$ are recursively enumerable. 
\end{theorem}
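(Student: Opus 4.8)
The plan is to exhibit a semi-decision procedure. On input a sequent $\Gamma \Rightarrow \varphi$, I would enumerate the range sizes $N = 1, 2, 3, \dots$ and, for each $N$, run a decision procedure for $\mathsf{SAT}$ over the single class $\mathcal{M}_N$, halting and accepting as soon as one of these reports ``satisfiable.'' By Definition \ref{def:models} we have $\mathcal{M}_{\text{fin}} = \bigcup_{N \ge 1} \mathcal{M}_N$, so a sequent is satisfiable over $\mathcal{M}_{\text{fin}}$ if and only if it is satisfiable over $\mathcal{M}_N$ for some finite $N$; hence this procedure halts exactly on the satisfiable instances, witnessing recursive enumerability. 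The substance of the argument is therefore the claim that $\mathsf{SAT}_{\mathcal{L}_*}$ over a fixed $\mathcal{M}_N$ is \emph{decidable} for each $* \in \{\mathrm{prob}, \mathrm{causal}\}$ and each $N$.

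To establish per-$N$ decidability I would reduce satisfiability over $\mathcal{M}_N$ to an instance of the existential theory of the reals (ETR), which is decidable by Tarski's theorem. This is essentially the reduction underlying Theorem \ref{theorem: succETR completeness}, which already places $\mathsf{SAT}_{\mathcal{L}_{\text{causal}}(N)}$ over $\mathcal{M}_N$ in $\text{succ}\exists\mathbb{R}$, a class that is itself decidable (decompress the circuit of Definition \ref{def:succETR} into an explicit, exponentially larger ETR formula and decide it). Several features of the full languages $\mathcal{L}_*$ must be accommodated beyond the bounded fragment treated there, but each collapses to something finite once $N$ is fixed: (i) a given sequent mentions only finitely many constants $c^V_i$, and in a model of range size $N$ each is interpreted as one of the $N$ values, so the possible constant interpretations---together with the induced pattern of $\equiv$-(in)equalities---form a finite case split that can be guessed; (ii) each formal summation $\sanssum_{v_i} \mathsf{t}$ unfolds into an ordinary finite sum $\sum_{n \in \Val(V)} \mathsf{t}$ of $N$ summands; and (iii) because each range is finite, the universal quantification over range-variable assignments $\iota$ implicit in $\mathfrak{M} \vDash \psi$ (Def.\ \ref{def:entailment}) becomes a finite conjunction over the finitely many assignments of the free range variables occurring in $\psi$. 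Using the standard canonical-form representation of a countable, recursive, finite-range SCM as a probability distribution over the finitely many deterministic ``response'' settings, every probability term $\probsymbol(\delta \mid \delta')$ becomes a ratio of linear expressions in finitely many real unknowns (the probabilities of the response settings), and the measure-zero conditions in the truth definition become polynomial disequalities.

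Assembling these pieces, a model satisfying the sequent exists over $\mathcal{M}_N$ iff there is a choice of the (finitely many) discrete data---constant interpretations and response-setting support---together with real probabilities satisfying a formula of the shape $\exists \vec{p}\,\big[\Psi_\Gamma(\vec{p}) \to \Psi_\varphi(\vec{p})\big]$, where $\Psi_\Gamma$ and $\Psi_\varphi$ are the finite conjunctions (over $\iota$) of Boolean combinations of polynomial (in)equalities produced by (i)--(iii). Since the ``$\forall \iota$'' quantification has already been expanded into finite conjunctions rather than genuine real quantifiers, this is a Boolean combination of polynomial constraints under a single existential block, i.e.\ an ETR instance; ranging over the finitely many discrete choices gives a finite disjunction of such instances, which is decidable. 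The case $\mathcal{L}_{\text{causal}}$ differs from $\mathcal{L}_{\text{prob}}$ only in admitting more response settings (those recording behavior under all interventions $[\mathcal{L}_{\mathrm{int}}]$), which remain finite in number for fixed $N$, so the same reduction applies.

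I expect the main obstacle to be the bookkeeping in (iii) together with the implication structure of the sequent: one must verify that the \emph{universal} interpretation of free variables in both $\Gamma$ and $\varphi$, sitting on opposite sides of the material implication defining sequent satisfaction (Def.\ \ref{def:entailment}), really does expand---without introducing quantifier alternation over the reals---into a first-order formula over the reals with only an existential block. The point that makes this work is precisely that finiteness of $\Val(V)$ turns every ``$\forall \iota$'' into a finite conjunction before any real quantifier is introduced; equal care is needed to ensure the $\bot$-handling of the truth definition is faithfully encoded by disequality constraints, so that conditioning on measure-zero events is treated correctly in both the antecedent and the consequent.
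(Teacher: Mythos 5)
Your proposal is correct and takes essentially the same route as the paper: the paper's proof likewise enumerates the possible finite ranges of the variables appearing in the sequent and, for each choice, runs the decision procedure for the fixed-range problem furnished by Theorem~\ref{theorem: succETR completeness} (satisfiability over $\mathcal{M}_N$ is in $\text{succ}\exists\mathbb{R}$, hence decidable), accepting as soon as some range size yields satisfiability. The extra bookkeeping you work out---unfolding sums into $N$-fold sums, guessing constant identifications, and expanding the universal reading of free variables into finite conjunctions before any real quantifiers appear---is precisely the content of the paper's proof of that cited theorem rather than a new ingredient.
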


To show Theorem~\ref{theorem: succETR completeness}, we use a characterization of $\text{succ}\exists\mathbb{R}$ by \citealt{blaeser2024existential}, which allows us to lift the results of \cite{mosse2022causal} to the setting with summation operators:

\begin{definition}
    A \textit{real RAM} is a tuple $(P, T, R)$, where $P$ is a program, $T$ is an infinite memory tape, and $R$ is a set of registers. There are two kinds of registers: real registers and word registers, which respectively store real numbers and binary strings. The program $P$ can move values between memory and registers, as well as add, subtract, multiply, and divide values stored in registers; $P$ can also perform bitwise Boolean operations on binary registers. A \textit{nondeterministic real RAM} $M$ receives a binary string as input $I$, together with a certificate $C = (x, s)$, where $x$ is a sequence of real numbers and $s$ is a binary string. Then $M$ accepts $I$ if there exists a certificate $C$ such that $M(I, C) =1$. (For details, see \citealt{erickson2022smoothing}.)

\end{definition}

\begin{theorem}[\citealt{blaeser2024existential}]\label{theorem: nondeterministic real ram}
$\text{succ}\exists\mathbb{R}$ is the precisely the sets of languages decided by nondeterministic real RAMs in exponential time.
\end{theorem}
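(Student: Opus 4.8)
The plan is to prove the two inclusions separately, taking as the crucial input the known \emph{base-level} characterization of $\exists\mathbb{R}$ as the class of languages decided by nondeterministic \emph{polynomial}-time real RAMs \citep{erickson2022smoothing}, and then transporting this characterization up exactly one exponential via the standard correspondence between circuit-encoded (``succinct'') instances and exponentially-scaled resource bounds. Thus the theorem becomes a succinctness lift of a polynomial-time equivalence to an exponential-time equivalence, and the two directions of the equality are handled by a decoding argument and a tableau argument respectively.

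For the inclusion $\text{succ}\exists\mathbb{R} \subseteq \{\text{languages decided by exp-time nondeterministic real RAMs}\}$, it suffices to exhibit a single nondeterministic real RAM deciding $\text{succETR}$ in exponential time, since this machine class is closed under polynomial-time many-one reductions: composing a poly-time reduction with an exp-time machine leaves the time bound exponential in the original input length. Given a circuit $C:\{0,1\}^N \to \{0,1\}^M$, the machine first reconstructs the explicit ETR tree $\varphi$ that $C$ computes, by evaluating $C$ on each of the $2^N$ candidate node locations; each evaluation costs $\text{poly}(|C|)$, so the entire explicit instance---of size at most $2^{\text{poly}(N)}$---is assembled in exponential time. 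It then decides satisfiability of this now-explicit instance by simulating the base-level nondeterministic real RAM of \citet{erickson2022smoothing}: it guesses the real assignment into real registers and the auxiliary binary certificate into word registers, and verifies in time polynomial in the instance size, hence $2^{\text{poly}(N)}$. The total running time is exponential, as required.

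For the reverse inclusion, let $L$ be decided by a nondeterministic real RAM $M$ running in time $2^{\text{poly}(n)}$ on inputs of length $n$. I would construct a polynomial-time map sending an input $I$ to a \emph{circuit} $C_I$ that computes an ETR tree $\varphi_I$ which is satisfiable exactly when $M$ accepts $I$. The explicit formula $\varphi_I$ is the computation-tableau encoding of an accepting run of $M$: existentially quantified real variables stand for the real certificate and for the contents of the real registers and tape cells across all time steps; further existential variables constrained to be $0/1$ (via $x \cdot (x - \underline{1}) \approx \underline{0}$) encode the word registers and control state; and polynomial (in)equalities encode the local transition relation together with the initial, cell-consistency, and accepting conditions. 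Since $M$ runs for $2^{\text{poly}(n)}$ steps over $2^{\text{poly}(n)}$ cells, $\varphi_I$ has size $2^{\text{poly}(n)}$, yet it is highly uniform: the label, parent, and children of the node at any $\text{poly}(n)$-bit address are a fixed local function of that address and of $I$. This uniformity is precisely what allows a \emph{polynomial-size} circuit $C_I$ to compute $\varphi_I$ in the sense of Def.~\ref{def:succETR}, with $C_I$ itself produced from $I$ in polynomial time. Hence $L$ reduces to $\text{succETR}$ in polynomial time, so $L \in \text{succ}\exists\mathbb{R}$.

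The main obstacle lies in the reverse-direction encoding. One must verify that \emph{every} operation of the real RAM---exact real addition, subtraction, multiplication, and division, real comparisons, and bitwise Boolean manipulation of word registers, along with branching that mixes the two register types---is faithfully expressible by polynomial constraints over the reals, and, crucially, that the resulting tableau is \emph{locally} and \emph{uniformly} describable, so that the address-to-node map is a polynomial-size circuit rather than merely a polynomial-time function. The first point is exactly the technical heart of the base-level $\exists\mathbb{R}$ characterization, which I would invoke and then apply cell by cell across the exponentially long computation history; the second is the succinctness bookkeeping (an analog of the Papadimitriou--Yannakakis circuit-succinctness theorem), ensuring that the exponential blow-up in instance size is matched by only a polynomial blow-up in the describing circuit and in the reduction's running time.
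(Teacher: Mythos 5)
Your proposal is correct and follows essentially the same route as the paper's sketch of \citet{blaeser2024existential}: the easy direction by having an exponential-time real RAM decode the circuit into the explicit ETR instance and then run the polynomial-time $\exists\mathbb{R}$ real-RAM solver of \citet{erickson2022smoothing}, and the hard direction by a Cook--Levin-style tableau simulation of the exponential-time machine whose uniformity yields a polynomial-size circuit computing the exponentially large formula. The only divergence is a detail of encoding---you constrain word registers bitwise via $x\cdot(x-\underline{1}) \approx \underline{0}$, whereas the paper (following Lemma~11 of \citealt{erickson2022smoothing}) represents words as single reals validated by the $\texttt{PowersOf2}$/$\texttt{IsWord}$ gadgets---which does not affect correctness.
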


We include a sketch of the proof by \cite{blaeser2024existential}:

\begin{proof}
    One direction is trivial: an exponential-time nondeterministic real RAM can simply use a circuit to write out the ETR instance it computes and solve this instance. For the other direction, it suffices to show that given a real RAM $M$, an input $I$, and $w,t \in \mathbb{N}$, one can construct a circuit $C$ of size $\text{poly}(|M|\cdot|I|\cdot w)\cdot \text{polylog}(t)$ in time $\text{poly}(|M|\cdot|I|\cdot w)$ encoding an ETR formula $\varphi$ which is satisfiable iff $M$ accepts $I$ within $t$ steps.

    We begin with the special case where $M$ is polynomial-time (Lemma 11 of \citealt{erickson2022smoothing}). In this case, let $\varphi$ be the formula
    \begin{multline*}
       \exists x_0,...,x_n \; \texttt{PowersOf2}(x_0,..,x_n) \land  \texttt{WordsAreWords}(x_0,..,x_n) \land  \texttt{FixInput}(x_0,..,x_n)\\ \land \texttt{ProgramCounter}(x_0,...,x_n) \land \texttt{Execute}(x_0,..,x_n). 
    \end{multline*}
    
    The subformulas are defined as follows:
    \begin{itemize}
        \item 
        \texttt{PowersOf2} ensures that the first $w$ variables $x_0,...,x_{w-1}$ are equal to the first $w$ powers of $2$.
        \item 
        $\texttt{IsWord}(X)$ for $X = x_i,...,x_j$ ensures that $X$ describes a number in binary between $x_0 = 1$ and $x_{w-1} = 2^{w-1}$. The sub-formula $\texttt{WordsAreWords}$ selects a subset $X_1,..,X_k$ of the variables $x_w,...,x_n$, ensuring that $\texttt{IsWord}(X_i)$ for $i \in [k]$. These variables represent the word registers of $M$.
        \item 
        \texttt{FixInput} ensures that some of the word variables encode the input $I$ provided to $M$.
        \item 
        $\texttt{ProgramCounter}(x_0,...,x_n)$ designates a single variable $x_t$ to indicate the time-step of computation.
        \item 
        Using the standard reduction provided by the Cook-Levin Theorem, $\texttt{Execute}(x_0,..,x_n)$ simulates $M$, using and updating the program counter at each time step.
    \end{itemize}

    Since $\varphi$ precisely simulates runs of $M$, there exist a certificate and input on which $M$ accepts iff there exist variables $x_0,...,x_n$ satisfying $\varphi$. \cite{blaeser2024existential} show that in the general case, one can construct a succinct circuit computing this formula in polynomial-time; correctness for the construction follows immediately from correctness in the special case.
\end{proof}

The completeness results of \cite{mosse2022causal} relied on small model property. This is no longer guaranteed, but given the above characterization, an exponential model property will suffice:


\begin{definition}
    Fix a formula $\varphi \in  \mathcal{L}_{\mathrm{causal}}$. Let $\mathbf{V}_\varphi$ denote all variables appearing in $\varphi$, and let $\mathcal{I}$ contain all interventions over all values of variables appearing in $\varphi$. Let $\Delta_\varphi$ contain all interventions $\alpha \in \mathcal{I}$ paired with all possible assignments resulting from those interventions:
\begin{align*}
    \Delta_\varphi = \Big\{\bigwedge_{\alpha \in \mathcal{I}} \; \Big( [\alpha] \bigwedge_{V \in \mathbf{V}_\varphi} \beta_V^\alpha\Big) \;  :\; \beta_V^\alpha \in \text{Val}(V) \text{ for }V \in \textbf{V}_{\varphi}\Big\}
\end{align*}
    We note that interventions and values are not restricted to those appearing in $\varphi$; this is a departure from the definition of $\Delta_\varphi$ in \cite{mosse2022causal}.
\end{definition}



\begin{lemma}[Exponential model property]\label{lemma: exponential model}
    Fix a formula $\varphi $ in $\mathcal{L}_{\text{causal}}^{\text{closed}}(N)$. If $\varphi$ is satisfiable, then it is satisfied by a model $\mathfrak{M}$ which assigns positive probability to at most $2^{\text{poly}(|\varphi|)}$ complete state descriptions $\delta \in \Delta_{\varphi}$.
\end{lemma}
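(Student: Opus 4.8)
The plan is to adapt the linear-algebraic small-model-property argument of \cite{mosse2022causal}, tracking carefully how the summation operators inflate the relevant counts. Suppose $\varphi$ is satisfied by some $\mathfrak{M}$, and let $(p_\delta)_{\delta \in \Delta_\varphi}$ be the distribution that $\mathfrak{M}$ induces on complete state descriptions: each $\delta$ records, for every intervention $\alpha \in \mathcal{I}$, the value of every $V \in \mathbf{V}_\varphi$, and $p_\delta = \mathbb{P}_{\mathfrak{M}}(\delta)$. By recursiveness this distribution determines $\mathbb{P}_{\mathfrak{M}}(\epsilon)$ for every base interventional event $\epsilon$ via $\mathbb{P}_{\mathfrak{M}}(\epsilon) = \sum_{\delta \models \epsilon} p_\delta$, and only those state descriptions realizable by some recursive SCM---call these \emph{valid}---carry positive mass.

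First I would expand every summation. Since we work over $\mathcal{M}_N$, each bound variable ranges over the fixed $N$-element set $\text{Val}(V)$, so a term $\sanssum_{v_i}\mathsf{t}$ unfolds into $\sum_{n \in \text{Val}(V)} \mathsf{t}[v_i / n]$, substituting the constant naming $n$ throughout, and a coefficient $\mathsf{v}_i$ becomes the fixed numerical value $n$. After fully unfolding, $\varphi$ becomes a Boolean combination of polynomial comparisons whose atoms are a finite set of conditional-probability primitives $\probsymbol(\epsilon \mid \epsilon')$ with all range variables replaced by constants. The key bookkeeping step is to bound the number $K$ of such primitives: each of the $\le |\varphi|$ syntactic templates is enclosed by at most $|\varphi|$ summations, each contributing a factor $N$, so $K \le |\varphi| \cdot N^{|\varphi|} \le 2^{\mathrm{poly}(|\varphi|)}$, treating the range bound $N$ as given in binary so that $\log N \le \mathrm{poly}(|\varphi|)$. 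This is exactly the point where succinctness forces an exponential rather than polynomial bound.

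Next comes the support reduction. For each primitive I collect the two linear functionals $p \mapsto \sum_{\delta \models \epsilon \wedge \epsilon'} p_\delta$ and $p \mapsto \sum_{\delta \models \epsilon'} p_\delta$ of the state-description probabilities; there are at most $2K$ of them. Consider the polytope of vectors $p$ supported on \emph{valid} state descriptions that are nonnegative, sum to $1$, and agree with $(p_\delta)$ on all these functionals. It is nonempty, since it contains $(p_\delta)$, and it is cut out by at most $2K+1$ linear equalities, so it has a vertex (basic feasible solution) $p'$ with at most $2K+1$ nonzero coordinates. Because $p'$ matches every numerator and denominator sum of the original---in particular agreeing on which conditioning events have probability zero---every primitive $\probsymbol(\epsilon \mid \epsilon')$ (including its possible value $\bot$) is preserved, hence so is the value of every term and the truth of every comparison, so $\varphi$ remains true. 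Finally I reconstitute a model: take an exogenous variable selecting among the $\le 2K+1$ valid state descriptions in $\mathrm{supp}(p')$ with probabilities $p'_\delta$, and structural functions that read the recorded values off the selected $\delta$; keeping the ranges $\text{Val}(V)$ and the constant interpretations of $\mathfrak{M}$ ensures the resulting $\mathfrak{M}' \in \mathcal{M}_N$ also agrees with $\mathfrak{M}$ on every equality atom. Then $\mathfrak{M}' \vDash \varphi$ and puts positive mass on at most $2K+1 \le 2^{\mathrm{poly}(|\varphi|)}$ elements of $\Delta_\varphi$.

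The main obstacle I anticipate is twofold, and both parts concern the interaction of the linear-algebra step with the rest of the setup: (i) confirming that restricting the polytope to valid state descriptions still lets the basic-feasible-solution argument go through, so that $p'$ genuinely arises from a recursive SCM rather than an arbitrary nonnegative vector; and (ii) verifying that preserving the finitely many linear functionals really does preserve the truth value of $\varphi$ in every case, especially the $\bot$ / division-by-zero behavior of conditional probabilities and the fact that formal sums range over all of $\text{Val}(V)$ and so must be re-evaluated, not merely copied, in $\mathfrak{M}'$. The count bounding $K$ is routine once the expansion is set up, but it is the crucial quantitative input that separates this exponential bound from the polynomial small-model property available for the summation-free languages.
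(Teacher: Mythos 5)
Your first step---expanding every summation into an explicit sum over the $N$ constants, justified by the closed-world assumption, with only an exponential blow-up in size---is exactly the paper's proof; the paper then simply invokes the small-model property for the summation-free causal language (Lemma~4.7 of \citealt{mosse2022causal}) as a black box and observes that $\Delta_{\varphi'} \subseteq \Delta_\varphi$, so the small model for the expanded formula $\varphi'$ is already a model of $\varphi$. You instead chose to inline the linear-algebraic proof of that lemma, and this is where there is a genuine gap---one you yourself flag as obstacle (i) but do not resolve.

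The problem is that restricting the polytope to vectors supported on \emph{valid} state descriptions, defined as those individually realizable by some recursive SCM, is too weak. A vertex of your polytope need not have support contained in $\mathrm{supp}(p)$: it can put positive mass simultaneously on a state description whose potential-outcome table forces the influence relation $X \rightsquigarrow Y$ (i.e., is consistent only with orderings in which $X$ precedes $Y$) and on another forcing $Y \rightsquigarrow X$. Each is individually valid, but any model realizing the mixture induces both influence relations, and no recursive SCM does so (Lemma~4.5 of \citealt{mosse2022causal}); since $\mathcal{M}_N$ consists of recursive models, your final reconstruction step (``an exogenous variable selecting among the state descriptions in $\mathrm{supp}(p')$'') does not in general produce a legal model. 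The repair is standard and is exactly what the cited lemma does: fix the well-order $\prec$ witnessed by the original recursive model $\mathfrak{M}$ and cut the polytope down to state descriptions compatible with that \emph{single} ordering. The original $p$ still lies in this smaller polytope, the basic-feasible-solution count $2K+1$ is unchanged, and now every point of the polytope---in particular the chosen vertex---is realized by a recursive SCM (Lemma~4.6 of \citealt{mosse2022causal}); this is also why the paper's Theorem~\ref{theorem: succETR completeness} algorithm includes an ordering $\prec$ in the certificate and checks compatibility with it. With that repair your argument goes through: your point (ii) is fine as stated, since the equality constraints preserve all numerators and denominators exactly (hence also the $\bot$ cases), and re-evaluating the expanded sums in $\mathfrak{M}'$ is harmless because you keep the same ranges and constant interpretations.
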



\begin{proof}
    Transform $\varphi$ into a formula $\varphi^\prime$ by writing out each sum and range variable or coefficient explicitly. For example, $ \sanssum_{x_i}x_i \cdot \probsymbol(X = x_i) $ is replaced with $1 \cdot \probsymbol\big(X=c_1^X\big)  + 2 \cdot \probsymbol\big(X = c_2^X\big) + \dots + n \cdot \probsymbol\big(X = c_n^X\big)$, where the natural number $n$ is shorthand for $\probsymbol(\top) + \dots + \probsymbol(\top)$, the $n$-term sum of $\probsymbol(\top)$. (Note that this formulation of $\varphi^\prime$ requires the closed-world and finite range assumptions: each value in the finite range of $X$ is named by a constant.)
    
    Then $\varphi^\prime$ contains no summation operators and no unbound assignments; it belongs to the fragment $\mathcal{L}$ of $\mathcal{L}_{\text{causal}}$ without summation or free variables. Let $\Delta_{\varphi^\prime}$ contain all interventions appearing in $\varphi^\prime$ paired with all possible assignments. Note that $\Delta_{\varphi^\prime} \subseteq \Delta_\varphi$. By the small model property for $\mathcal{L}$ (Lemma~4.7 of \citealt{mosse2022causal}), $\varphi^\prime$ has a model which assigns positive probability to at most $|\varphi^\prime| \leq 2^{\text{poly}(|\varphi|)}$ complete state descriptions $\delta \in \Delta_{\varphi^\prime}$. This is also a model of $\varphi$, given that $\Delta_{\varphi^\prime} \subseteq \Delta_\varphi$ and the construction of $\varphi^\prime$ from $\varphi$. 
\end{proof}

We now show completeness, using the following hardness result:

\begin{theorem}[\citealt{van2023hardness}]\label{theorem: prob SAT is succETR-complete}
    The satisfiability problem for $\mathcal{L}_{\text{prob}}^{\text{closed}}(N)$ is $\text{succETR}$-hard.
\end{theorem}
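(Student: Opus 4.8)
The goal is a polynomial-time reduction from $\text{succETR}$ to $\mathsf{SAT}_{\mathcal{L}_{\text{prob}}^{\text{closed}}(N)}$ (for any fixed $N \ge 2$). Given a Boolean circuit $C$ computing an ETR tree $\varphi$ whose node locations are binary strings of length $\ell = \text{poly}(|C|)$, the plan is to build, in time $\text{poly}(|C|)$, a single closed formula $\Phi_C \in \mathcal{L}_{\text{prob}}^{\text{closed}}(N)$ that is satisfiable over $\mathcal{M}_N$ exactly when the (possibly exponentially large) instance $\varphi$ is satisfiable. The organizing idea is to let a tuple of random variables $\mathbf{W} = (W_1, \dots, W_\ell)$, each used over two of its $N$ values, encode a node location, so that $\sanssum_{w_1}\cdots\sanssum_{w_\ell}$ ranges succinctly over all $2^\ell$ nodes. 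The real value computed at node $\mathbf{w}$ is represented by a conditional-probability term $\mathrm{val}(\mathbf{w}) := \probsymbol(E \mid \mathbf{W}=\mathbf{w})$ for a fresh event $E$ (using a difference $\probsymbol(E^+\mid\cdot) - \probsymbol(E^-\mid\cdot)$ for signed values); I would first preprocess $\varphi$ into a standard bounded normal form so that all intermediate values stay in a fixed interval and are thus representable this way.

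First I would \emph{encode the circuit symbolically on the summation index}. Since $C$ has only $\text{poly}(|C|)$ gates, I introduce one Boolean random variable per gate and emit, for each gate, a constant-size ``gate-consistency'' constraint that assigns probability zero to every joint configuration in which that gate's output bit disagrees with the Boolean operation applied to its input bits (each such constraint being an equation over a bounded summation). With $\mathbf{W}$ playing the role of the circuit's symbolic input, the support of any satisfying model is confined to configurations on which the designated output gates correctly compute the label $\ell_{\mathbf{W}}$ and the child-pointer bits of node $\mathbf{W}$, as functions of $\mathbf{W}$. This is exactly where marginalization buys succinctness: the poly-size circuit stands in for the exponential tree.

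Next I would enforce \emph{arithmetic consistency at every node by a single summation}. Writing $\mathrm{Err}(\mathbf{W})$ for a polynomial term that, using the label-gate variables as $0/1$ selectors, equals the local discrepancy appropriate to $\mathbf{W}$'s operation (e.g.\ $\mathrm{val}(\mathbf{W}) - \big(\mathrm{val}(c_1) + \mathrm{val}(c_2)\big)$ at a $+$-node, the analogous product and difference forms at $\times$- and $-$-nodes, and Boolean/comparison gadgets mapping to $0/1$ at logical nodes), I impose
\[
\sanssum_{\mathbf{W}} \mathrm{Err}(\mathbf{W}) \cdot \mathrm{Err}(\mathbf{W}) \approx \underline{0}.
\]
Because each summand is a square and hence nonnegative, the sum vanishes iff $\mathrm{Err}(\mathbf{w}) = 0$ for every node $\mathbf{w}$; this converts the universally quantified ``every node computes correctly'' into one closed constraint. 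A final conjunct $\mathrm{val}(\mathrm{root}) \approx \underline{1}$ (with the root a fixed constant location) asserts that the formula evaluates to true. All range variables are bound by sums, so $\Phi_C \in \mathcal{L}_{\text{prob}}^{\text{closed}}(N)$.

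The \textbf{main obstacle} is \emph{dereferencing the child pointers}: inside the summand indexed by $\mathbf{W}$, the term $\mathrm{val}(c_j(\mathbf{W}))$ must read the value table at the data-dependent location $c_j(\mathbf{W})$, which is not a constant. My plan is to maintain auxiliary ``child copies'' $(\mathbf{W}^{(j)}, E^{(j)})$ of the location/value pair, tie $\mathbf{W}^{(j)}$ to the child-pointer output bits, and impose a global \emph{same-table} constraint forcing the conditional law of the value given the location to agree across copies --- itself expressed as a summation of squared differences over all locations set to $\underline{0}$. Then $\mathrm{val}(c_j(\mathbf{W}))$ is accessed as $\probsymbol(E^{(j)} \mid \mathbf{W}^{(j)} = c_j(\mathbf{W}))$ and, by the same-table constraint, equals the main table's value at that location. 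Verifying that these consistency constraints exactly pin down a well-defined value function --- together with the boundedness preprocessing that keeps all $\mathrm{val}$-terms representable by probabilities --- is the delicate part; once it is in place, a satisfying model corresponds precisely to a consistent evaluation of $\varphi$ under some real assignment, so $\Phi_C$ is satisfiable iff $\varphi$ is. Since every component is of size $\text{poly}(|C|)$, the reduction runs in polynomial time, establishing $\text{succETR}$-hardness.
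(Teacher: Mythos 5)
Note first that the paper does not prove this theorem at all: it is imported verbatim from \citet{van2023hardness} and used as a black box in the proof of Theorem~\ref{theorem: succETR completeness}, so you are reconstructing the cited reduction rather than paralleling anything in this paper. Your architecture (value tables over the $2^\ell$ node locations indexed by $\sanssum$-bound tuples $\mathbf{W}$, polynomially many gate variables forced to compute the circuit on the support, and a sum-of-squared-errors-equals-zero constraint to enforce per-node correctness) is the right general shape. But there is a concrete gap at the heart of the construction: your device $\mathrm{val}(\mathbf{w}) := \probsymbol(E \mid \mathbf{W}=\mathbf{w})$ occurring \emph{inside} $\sanssum_{\mathbf{w}}$ is exactly the one pattern this language cannot express. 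As the preliminaries note, division is eliminable by clearing denominators \emph{except} when the bound variable of a sum occurs in a denominator; here $\probsymbol(E \mid \mathbf{W}=\mathbf{w}) = \probsymbol(E \land \mathbf{W}=\mathbf{w})/\probsymbol(\mathbf{W}=\mathbf{w})$ has the bound $\mathbf{w}$ in the denominator, and the same is true of your $\mathrm{Err}$ summands and your same-table constraint. You can cross-multiply inside the summand, but then any model that assigns $\probsymbol(\mathbf{W}=\mathbf{w})=0$ to some location makes the constraint at that node vacuous, and soundness fails. Ruling this out requires $\probsymbol(\mathbf{W}=\mathbf{w}) \succ \underline{0}$ simultaneously at all $2^\ell$ locations --- a universal \emph{strict} inequality over exponentially many events, which your sum-of-squares trick (which only captures equational, i.e., closed, conditions) cannot encode; you would need an extra mechanism, such as succinctly pinning the marginal of $\mathbf{W}$ to the exact uniform distribution, and you supply none.

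The second substantive gap is the boundedness preprocessing, which you assert but do not construct. In the succinct setting the tree has $2^{\mathrm{poly}(|C|)}$ nodes, and iterated squaring along such a tree forces intermediate values (and satisfying witnesses) of doubly exponential magnitude and beyond, so ``standard bounded normal form'' is not an off-the-shelf step: one must prove that a bounded-domain, normal-form variant of succinct ETR remains $\text{succETR}$-hard via a polynomial-time transformation \emph{on circuits}, and this is a substantive lemma in the actual hardness proof, not a routine rescaling. Two smaller holes of the same flavor: distinct leaves labeled by the same ETR variable must be forced to take equal values (this needs a second table keyed by variable index, with another data-dependent dereference that you never address), and comparison/logical nodes cannot be handled by squared-error-equals-zero alone, since strict inequalities are open conditions and require slack-with-inverse witnesses, i.e., yet more existential tables with their own consistency constraints. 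The plan is credible in outline, but as written the reduction does not go through without these missing constructions.
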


\begin{proof}[Proof of Theorem~\ref{theorem: succETR completeness}]
     By Theorem~\ref{theorem: prob SAT is succETR-complete}, satisfiability for $\mathcal{L}_{\text{prob}}^{\text{closed}}(N)$, and thus for $\mathcal{L}_{\text{causal}}(N)$, is succETR-hard. It then suffices to show that satisfiability for $\mathcal{L}_{\text{causal}}(N)$ is in succETR. Consider any sequent $\varphi \Rightarrow \psi$ in $\mathcal{L}_{\text{causal}}$. By Theorem~\ref{theorem: nondeterministic real ram}, it suffices to construct a nondeterministic exponential time real RAM $M$ which decides whether $\varphi \Rightarrow \psi$ is satisfiable:

    \begin{enumerate}
        \item 
        For a possible assignment $\iota$ of free range variables appearing in $\varphi$, let $\varphi_\iota$ denote the result of replacing all free variables with constants denoting their assignments, e.g. replacing $\mathsf{P}(V= v_i)$ with $\mathsf{P}(V = c_j^V)$, where $j = \iota(V, i)$.  Let $\overline{\varphi} = \bigwedge_\iota \varphi_\iota$, and similarly for $\overline{\psi}$. Then $\overline{\varphi}, \overline{\psi} \in \mathcal{L}_{\text{causal}}^{\text{closed}}$, and $\varphi \Rightarrow \psi$ iff $\overline{\varphi} \Rightarrow \overline{\psi}$. Let $M$ construct $\overline{\varphi} \Rightarrow \overline{\psi}$, which is exponential in the size of the input formula. Since this sequent has no free variables, it is equisatisfiable with $\overline{\varphi} \rightarrow \overline{\psi}$ and may be considered a single formula $\varphi^\prime$.
        \item 
        Using Lemma~\ref{lemma: exponential model}, let $M$ receive as a certificate the support $\Delta^\prime$ of an exponential model for $\varphi^\prime$ and an ordering $\prec$ on the variables $\textbf{V}_{\varphi^\prime}$. Note that $|\Delta^\prime| \leq |\Delta_{\varphi^\prime}| \leq 2^{\text{poly}(|\varphi|)} $.
        \item
        $M$ will then check that $\Delta^\prime \subseteq \Delta_{\varphi^\prime}$ and that the $\delta \in \Delta^\prime$ do not induce influence relations which are incompatible with the ordering $\prec$ (Lemma~4.5 of \citealt{mosse2022causal}).\footnote{A model $\mathfrak{M}$ \textit{induces the influence relation} $V_i \rightsquigarrow V_j$ when there exist values $v, v^\prime \in \text{Val}(V_j)$ and interventions $\alpha,\alpha^\prime$ differing only in the value they impose upon $V_i$ for which
$\mathfrak{M} \vDash \probsymbol\big([\alpha] V_j = v \land [\alpha^\prime] V_j = v^\prime\big)  > 0.$ Given an enumeration of variables $V_1,...,V_n$ compatible with a well-order $\prec$, the model $\mathfrak{M}$ is \textit{compatible with} $\prec$ when it induces no instance $V_i \rightsquigarrow V_j$ with $i > j$.}
        \item 
        $M$ will form a formula $\varphi_{\text{prob}}$ from $\varphi^\prime$ by replacing each $\probsymbol(\epsilon)$ in $\varphi$ with
        \[
        \probsymbol\bigg(\bigvee_{\substack{\delta \in \Delta^\prime\\\delta \vDash \epsilon}} f(\delta)\bigg),
        \]where $f$ is a bijection between $\Delta^\prime$ and an arbitrary set of mutually unsatisfiable propositional statements built from the atoms $X = c^X_3, Y= c^Y_1$, and so on.

        \item 
        Now $\varphi_{\text{prob}} \in \mathcal{L}_{\text{prob}}^{\text{closed}}(N)$ and contains no marginalization. Satisfiability for $\mathcal{L}_{\text{prob}}^{\text{closed}}(N)$ without marginalization is ETR-complete \citep{mosse2022causal}. Any problem in ETR is decidable by a nondeterministic, polynomial-time real RAM $M^\prime$ \citep{erickson2022smoothing}, so $M^\prime$ solves the satisfiability problem for $\psi$ in polynomial time. Let $M$ run $M^\prime$ on $\varphi_{\text{prob}}$ and return the result.
    \end{enumerate}

    If $\varphi\Rightarrow \psi$ is satisfiable, then there is some probability assignment to the $\delta \in \Delta^\prime$ which satisfies it (by Lemma~\ref{lemma: exponential model}); this assignment is also a model of $\varphi_{\text{prob}}$. Conversely, if there is a model of $\varphi_{\text{prob}}$, then the mutual unsatisfiability of the $f(\delta)$ ensures the existence of a causal model for $\varphi^\prime = \varphi \rightarrow \psi$, and thus for $\varphi \Rightarrow \psi$ (Lemma 4.6 of \citealt{mosse2022causal}). Thus the map $\varphi \Rightarrow \psi \mapsto \varphi_{\text{prob}}$ preserves and reflects satisfiability, and $M$ decides the satisfiability of the latter.
\end{proof}


In addition to resolving an open question about the complexity of $\mathsf{SAT}_{\mathcal{L}_{\text{causal}}(N)}$, the above immediately gives an independent proof of the completeness results \cite{van2023hardness} obtained for languages intermediate between $\mathcal{L}_{\text{causal}}(N)$ and $\mathcal{L}_{\text{prob}}^{\text{closed}}(N)$. We note that \cite{dorfler2024probabilistic} independently obtained the above result, by the same argument, for the fragment of $\mathcal{L}_{\text{causal}}$ without coefficients or free variables.

The above result can also be used to show recursive enumerability for $\mathsf{SAT}_{\mathcal{L}_{\text{causal}}}$ over merely finite models:

\begin{proof}[Proof of Theorem~\ref{RE-Pi2}]
    We show that $\mathsf{SAT}_{\mathcal{L}_{\text{causal}}}$ over $\mathcal{M}_{\text{fin}}$ is recursively enumerable. Fix a satisfiability problem $\Gamma \Rightarrow \psi$; with $\Gamma$ assumed finite, we may replace it with a single formula $\varphi$. Enumerate over all possible finite ranges of all variables appearing in $\varphi,\psi$.

    For each such finite range, by Theorem~\ref{theorem: succETR completeness}, the satisfiability problem for $\varphi \Rightarrow \psi$ is computable by some Turing machine $M$. Run $M$ and declare $\varphi \Rightarrow \psi$ satisfiable if $M$ returns yes. If $\varphi \Rightarrow \psi$ is satisfiable over $\mathcal{M}_\text{fin}$, then it is satisfiable for some choice of finite ranges, and this procedure will correctly declare it satisfiable.
\end{proof}

\section{Axiomatization}\label{section: axiomatization}

We now work toward an axiomatization of the entailment relation $\Gamma \vDash \varphi$ for probabilistic languages.
We will make the coefficient restriction of \eqref{eqn:circterms}, focusing on the languages
$\mathcal{L}^{\ddagger}_{\text{prob}}$ and $\mathcal{L}^{\ddagger}_{\text{prob}}(N)$, where $\ddagger \in \{ \circ, \circ+\text{closed} \}$.
Note that our semantics have possibly unintuitive outcomes when it comes to conditional probability.
For example, even the formula $\probsymbol(Y = y \mid X = x) \succsim \underline{0}$ or $\probsymbol(Y = y \mid X = x) \succ \underline{0} \lor \lnot \probsymbol(Y = y \mid X = x) \succ \underline{0}$ is invalid, as witnessed by any model assigning zero measure to some $X = x$, and just as in Kleene's 3-valued logic there are no Boolean tautologies.
One solution is to ``guard'' each formula with an antecedent saying that all its conditions occur with positive probability.
However, in order to simplify the results, we will instead assume every SCM $\mathfrak{M}$ to be \emph{positive}, meaning that the observational joint distribution $\mathbb{P}_{\mathfrak{M}}(\mathbf{V})$ is strictly positive, i.e., such that the measure of any clopen cylinder set is nonzero. (Recall a cylinder set is any $S = \bigtimes_{V \in \mathbf{V}} S_V$ where $S_V = \mathrm{Val}(V)$ for all except finitely many $V$.) Although this assumption is stronger, it is ubiquitous in causal inference (see, e.g., \citealt{Shpitser,Pearl2009} for typical examples).
\begin{definition}\label{def:posmod}
Let $\mathcal{M}^+, \mathcal{M}_{\text{fin}}^+, \mathcal{M}_N^+$ be the positive subclasses of $\mathcal{M}, \mathcal{M}_{\text{fin}}, \mathcal{M}_N$ respectively.
\end{definition}


The assumption of positivity ensures that conditional probabilities are well-defined, provided we never condition on events equivalent to $\bot$.
We will therefore restrict conditions to the fragment $\mathcal{L}_{\text{cond}} \subset \mathcal{L}_{\text{base}}^{\mathrm{prob}}$ consisting of $\top$ and pure conjunctions of literals ($V = d$ or $\lnot V = d$) such that for any $V \in \mathbf{V}$, a literal involving $V$ appears at most once in $\varphi$. It is straightforward to see that any $\varphi \in \mathcal{L}_{\text{cond}}$
receives a positive probability under any positive $\mathfrak{M}$ and assignment $\iota$.

\begin{definition}\label{def:poslang}
Let $\mathcal{L}_{\text{prob}^+}^\ddagger$ for $\ddagger \in \{ \circ, \circ+\text{closed} \}$ be the fragment of $\mathcal{L}_{\text{prob}}^\ddagger$ where in any primitive term $\probsymbol(\delta \;|\: \delta')$ from \eqref{eqn:circterms} we require $\delta' \in \mathcal{L}_{\text{cond}}$.
Let $\mathcal{L}_{\text{prob}^+}^\ddagger(N)$ be the same over signatures of uniform cardinality $N$ (cf. \S{}\ref{sec:languages:bounded}).
With $\mathcal{L}$ standing for $\mathcal{L}_{\text{prob}^+}^\circ$, we abbreviate these four as $\mathcal{L}_{**}^*$ for $* \in \{\varepsilon, \text{closed}\}, ** \in \{\varepsilon, N\}$.
\end{definition}


\subsection{Axioms}\label{sec:comp:ax}
We start with the system $\mathsf{AX}_{\text{poly}}$ from \citet{IBELING2023103339}. This includes Boolean tautologies across probability comparisons as well as equalities between variable symbols.
As for new axioms, first, we include properties of $\equiv$; for any $c, c' \in \mathcal{C}_V$ and $V \in \mathbf{V}$:
\begin{eqnarray*}
\mathsf{EqReflex}. && c \equiv c\\
\mathsf{EqReplace}. && c \equiv c' \rightarrow( \varphi[v / c] \rightarrow \varphi[v / c']).
\end{eqnarray*}
Another principle states that unequal constants cannot occur simultaneously:
\begin{eqnarray*}
\mathsf{EqDist}. && c \not\equiv c' \rightarrow \probsymbol(V = c \land V = c') \approx \underline{0}.
\end{eqnarray*}
Then, a single axiom schema allows us to reduce conditional to unconditional probability:
\begin{eqnarray*}
\mathsf{Cond}. && \probsymbol(\delta \;|\; \delta') \succsim \mathsf{t} \leftrightarrow \probsymbol(\delta \land \delta') \succsim \mathsf{t}\cdot \probsymbol(\delta').
\end{eqnarray*}
Next, we have a lower bound on sums. Where $S \subset \mathcal{C}_V$ is any finite subset of constants:
\begin{eqnarray*}
\mathsf{SumLower}. && \bigwedge_{\substack{i \neq j\\c^V_i, c^V_j \in S}} c^V_{i} \not\equiv c^V_j \rightarrow \sanssum_{v_i} \mathsf{t} \succsim \sum_{c^V_i \in S} \mathsf{t}[V = {v_i} / V = c^V_i]
\end{eqnarray*}
After that, an axiom captures the strict positivity assumed in our model classes. Where $\alpha$ ranges over $\mathcal{L}_{\mathrm{cond}}$:
\begin{eqnarray*}
\mathsf{Pos}. && \probsymbol(\alpha) \succ \underline{0}.
\end{eqnarray*}
Finally, for the case of finite signatures only, to capture the distinctness requirement in $\mathcal{M}_N$ we introduce the axiom schema
\begin{eqnarray*}
\mathsf{Fin}_N. && \sanssum_{v_i} \probsymbol(\top) \approx \underline{N}. 
\end{eqnarray*}

\subsection{Rules}\label{sec:comp:rules}
The system $\mathsf{AX}_{\text{poly}}$ we adopted from prior work has only one rule, \emph{modus ponens}, i.e., $\{ \varphi, \varphi \rightarrow \psi\} \vdash \psi$ or
\begin{align*}
\infer[\mathsf{MP}.]{\psi}{\varphi, \varphi\rightarrow \psi}
\end{align*}
As we will illustrate by several different examples, all our languages are incompact over their respective model classes, necessitating the use of rules with infinitely many premises in order to obtain strongly complete axiomatizations.
We first introduce a variant of the infinitary rule R3 of \citet{Perovic}:
\begin{align*}
\infer[\mathsf{Conv}]{\varphi \rightarrow \mathsf{t} \precsim \underline{0}}{\varphi \rightarrow \mathsf{t} \precsim \underline{1/n} \text{ for every } n = 1, 2, 3, \dots}
\end{align*}
which allows eliciting a contradiction from finitely-satisfiable, unsatisfiable sets, such as $$\{\probsymbol(V = c_1^V) \precsim \underline{1/n} : n = 1, 2, 3, \dots \} \cup \{\probsymbol(V = c_1^V) \succ \underline{0} \}.$$

Next, we add the following rule (or collection thereof), for every finite $\mathbf{X} \subset \mathbf{V}$ and rational $q > 0$:
\begin{align*}
\infer[\mathsf{Unity}]{\varphi \rightarrow \underline{1} - \probsymbol\Big(\bigwedge_{X \in \mathbf{X}}  X = c_X^1 \lor \lnot X = c_X^1\Big) \succ \underline{q}}{\varphi \rightarrow \underline{1} - \probsymbol\Big(\bigwedge_{X \in \mathbf{X}} \bigvee_{\substack{1\le i \le n\\ c^X_i \in \mathcal{C}_X}} X = c_X^i\Big) \succ \underline{q} \text{ for every } n = 1, 2, 3, \dots}
\end{align*}
Note that from its conclusion we can deduce $\varphi \rightarrow \bot$; nevertheless, it will be important for our proof strategy that it have the specific form above.
Also observe that the rule above is infinitary iff we have infinitely many constant symbols; the set of formulas $\big\{ \probsymbol(V = c_V^n) \approx \underline{1/3^n} : n = 1, 2, 3, \dots \big\}$ in such a signature is finitely satisfiable but unsatisfiable, and $\mathsf{Unity}$ derives a contradiction from it.



After that, the following rule provides upper bounds on sums.
Where $\varphi$ is a formula and $\mathsf{t}$ is some term, $n \ge 1$ and $\Pi = \{ \Pi_\upsilon\}_{\upsilon \in \Upsilon}$ is a partition of $\mathcal{C}_X(n) = \{c^X_i \in \mathcal{C}_X : 1\le i \le n \}$, let $\varphi_{n, \Pi}$ denote the formula
\begin{eqnarray}\label{eqn:sumupperinstance}
\varphi_{n, \Pi} \quad := \quad
\varphi \rightarrow \Big[\Big(\bigwedge_{\substack{\upsilon \in \Upsilon \\ c^X_i, c^X_j \in \Pi_\upsilon}} c^X_i = c^X_j \land \bigwedge_{\substack{\upsilon \neq \upsilon' \in \Upsilon \\ c^X_i \in \Pi_\upsilon\\ c^X_j \in \Pi_{\upsilon'}}} c^X_i \neq c^X_j\Big) \rightarrow
\sum_{\upsilon \in \Upsilon} \mathsf{t}\big[X = x_i / \bigvee_{c^X_i \in \Pi_\upsilon} X = c^X_i\big] \precsim 
\mathsf{t}' 
\Big].
\end{eqnarray}
Letting $\mathbf{B}_n$ be the set of partitions of $\mathcal{C}_X(n)$, we have a rule:
\begin{align*}
\infer[\mathsf{SumUpper}.]{\varphi \rightarrow \sanssum_{x_i} \mathsf{t} \precsim \mathsf{t}'}{\varphi_{n, \Pi} \text{ for each } n \in \mathbb{Z}^+, \Pi \in \mathbf{B}_n}
\end{align*}
As with $\mathsf{Unity}$, the rule $\mathsf{SumUpper}$ becomes infinitary with infinite signatures.
A finitely satisfiable set it proves inconsistent is
$\big\{ \bigwedge_{1 \le i \neq j \le n} c^X_i \neq c^X_j \land \sum_{i = 1}^n \probsymbol(X = c^X_i) \precsim \underline{1/2} : n = 1, 2, 3, \dots \big\}$.

Finally, consider the set $\{ \probsymbol( X = c_X^n ) \approx \underline{1/2^n}: n = 1, 2, 3, \dots\}$, satisfiable in $\mathcal{M}$.
In $\mathcal{M}_{\text{fin}}$, this set is finitely satisfiable but unsatisfiable.
The next rule addresses the distinction; as with $\mathsf{Unity}$, its conclusion proves $\varphi \rightarrow \bot$:
\begin{align*}
\infer[\mathsf{Fin}.]{\varphi \rightarrow \sanssum_{x_i} \probsymbol(\top) \prec \underline{0}}{\varphi \rightarrow \sanssum_{x_i} \probsymbol(\top) \succsim \underline{n} \text{ for every } n = 1, 2, 3, \dots}
\end{align*}
Although $\mathsf{Fin}$ is still an infinitary rule if we have finitely many constants, we will not need it in this case.


Having completed our description of the axioms and rules, we can now define the systems for the closed fragments:
\begin{definition}\label{def:axclosed}
Let $\mathsf{AX}^{\text{closed}} = \mathsf{AX}_{\text{poly}} + \mathsf{EqReflex} + \mathsf{EqReplace} + \mathsf{EqDist} + \mathsf{Cond} + \mathsf{SumLower} + \mathsf{Pos} + \mathsf{Conv} + \mathsf{Unity} + \mathsf{SumUpper}$,
$\mathsf{AX}^{\text{closed}}_N = \mathsf{AX}^{\text{closed}}+ \mathsf{Fin}_N$, and
$\mathsf{AX}^{\text{closed}}_{\text{fin}} = \mathsf{AX}^{\text{closed}} + \mathsf{Fin}$.
\end{definition}


\subsection{Deduction with free variables}
Finally add two rules toward axiomatizing the languages with free variables (implicitly universally quantified):
\begin{eqnarray*}
\infer[\mathsf{FreeElim}]{\varphi[v / c]}{\varphi} \qquad\qquad
\infer[\mathsf{FreeIntro}]{\varphi}{\varphi[v/ c] \text{ for each } c \in \mathcal{C}_V}
\end{eqnarray*}
where $v \in \mathcal{V}_V$ is any range variable for $V$ and in the conclusion of $\mathsf{FreeElim}$, $c \in \mathcal{C}_V$ is any $V$-constant.
As the deduction theorem fails in this setting (see Rmk.~\ref{rmk:deductionfail}), unlike with the previous infinitary rules we have no intuitive example of incompactness associated with $\mathsf{FreeIntro}$.
\begin{definition}\label{def:axopen}
Let $\mathsf{AX}_* = \mathsf{AX}_*^{\text{closed}} + \mathsf{FreeElim} + \mathsf{FreeIntro}$ for each $* \in \{ \varepsilon, N, \text{fin}\}$.
\end{definition}
Although these systems just amount to handling free variables via (infinitely many) constant substitutions, note that those of \S{}\ref{sec:comp:ax}--\ref{sec:comp:rules} at least generally remain sound when schemata range over free variables.

\subsection{Examples}

Before proving completeness, we give a few examples to illustrate reasoning with the systems above.
First, it is straightforward to see that the deduction theorem \emph{does} hold in the closed systems of Definition~\ref{def:axclosed}:
\begin{lemma}[Deduction Theorem]
\label{lem:deduction} Let $T \subset \mathcal{L}_*^{\text{closed}}$ and $\varphi, \psi \in \mathcal{L}_*^{\text{closed}}$. Then $T \vdash \varphi \rightarrow \psi$ iff $T \cup \{\varphi\} \vdash \psi$, where $\vdash$ is derivability in ${\mathsf{AX}^{\text{closed}}_*}$. \qed
\end{lemma}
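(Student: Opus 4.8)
The plan is to prove the nontrivial (right-to-left) direction by induction on the structure of the derivation of $\psi$ from $T \cup \{\varphi\}$ in $\mathsf{AX}^{\text{closed}}_*$; the converse is immediate, since from $T \vdash \varphi \rightarrow \psi$ a single application of $\mathsf{MP}$ yields $T \cup \{\varphi\} \vdash \psi$. Because the relevant systems contain infinitary rules ($\mathsf{Conv}$, $\mathsf{Unity}$, $\mathsf{SumUpper}$, and, in the case of $\mathsf{AX}^{\text{closed}}_{\text{fin}}$, also $\mathsf{Fin}$), a derivation is not a finite sequence but a well-founded tree with possibly infinite branching, so the induction runs on the rank of this derivation tree rather than on its length. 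The induction hypothesis is that for every node labeled $\chi$ occurring in the tree, $T \vdash \varphi \rightarrow \chi$, and the goal is to conclude this for the root $\psi$.

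For the base cases and the finitary rule I would run the classical Kleene deduction-theorem argument, which is available because $\mathsf{AX}_{\text{poly}}$ supplies all Boolean tautologies across probability comparisons. If $\psi$ is an axiom, lies in $T$, or equals $\varphi$, then $\vdash \psi \rightarrow (\varphi \rightarrow \psi)$ and $\vdash \varphi \rightarrow \varphi$ give the claim directly. If $\psi$ is obtained by $\mathsf{MP}$ from $\chi$ and $\chi \rightarrow \psi$, the induction hypothesis yields $T \vdash \varphi \rightarrow \chi$ and $T \vdash \varphi \rightarrow (\chi \rightarrow \psi)$, and the tautology $\big(\varphi \rightarrow (\chi \rightarrow \psi)\big) \rightarrow \big((\varphi \rightarrow \chi) \rightarrow (\varphi \rightarrow \psi)\big)$ with two applications of $\mathsf{MP}$ delivers $T \vdash \varphi \rightarrow \psi$.

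The crux is the infinitary rules, and here I would exploit the deliberate ``guarded'' shape in which each is stated: premises and conclusion all take the form $\theta \rightarrow (\cdots)$ for a common, schematic antecedent $\theta$. Consider, for example, a node $\psi = \big(\theta \rightarrow \mathsf{t} \precsim \underline{0}\big)$ obtained by $\mathsf{Conv}$ from premises $\theta \rightarrow \mathsf{t} \precsim \underline{1/n}$. The induction hypothesis gives $T \vdash \varphi \rightarrow \big(\theta \rightarrow \mathsf{t} \precsim \underline{1/n}\big)$ for every $n$; since $\varphi \rightarrow (\theta \rightarrow \cdots)$ and $(\varphi \land \theta) \rightarrow \cdots$ are propositionally interderivable in $\mathsf{AX}_{\text{poly}}$, we obtain $T \vdash (\varphi \land \theta) \rightarrow \mathsf{t} \precsim \underline{1/n}$ for every $n$. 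Now I apply $\mathsf{Conv}$ itself, but with antecedent $\varphi \land \theta$ in place of $\theta$, concluding $T \vdash (\varphi \land \theta) \rightarrow \mathsf{t} \precsim \underline{0}$, and rewrite back to $T \vdash \varphi \rightarrow \big(\theta \rightarrow \mathsf{t} \precsim \underline{0}\big)$, which is $\varphi \rightarrow \psi$. The identical absorb-the-hypothesis-into-the-guard maneuver handles $\mathsf{Unity}$, $\mathsf{SumUpper}$ (where $\theta$ is the antecedent of $\varphi_{n,\Pi}$), and $\mathsf{Fin}$ verbatim, because in each the antecedent is an arbitrary formula shared between premises and conclusion.

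I expect the main obstacle to be merely getting this absorption step stated cleanly and recognizing that it is exactly the guarded form of the rules that makes the lemma go through; so I would prove the step once, abstractly, for any rule whose premises and conclusion share a common antecedent, and then observe that every infinitary rule of $\mathsf{AX}^{\text{closed}}_*$ instantiates it. This also explains why the lemma is confined to the closed systems: $\mathsf{AX}^{\text{closed}}_*$ deliberately omits $\mathsf{FreeElim}$ and $\mathsf{FreeIntro}$, which lack the guarded form and do not commute with the universal reading of free variables, so the analogous deduction theorem fails for the open systems (cf. Remark~\ref{rmk:deductionfail}).
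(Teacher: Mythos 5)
Your proof is correct. Note that the paper itself gives no proof of this lemma at all---it is stated with ``it is straightforward to see'' and an immediate \verb|\qed|---so your argument fills in exactly the reasoning the authors leave implicit: the standard Kleene induction (here on the rank of a well-founded, possibly infinitely-branching derivation tree), with the crucial observation that each infinitary rule is stated with an arbitrary schematic antecedent shared between premises and conclusion, so that $\varphi$ can be absorbed into that guard, the rule reapplied with antecedent $\varphi \land \theta$, and the result unfolded back. This is precisely the point the paper gestures at when it says of $\mathsf{Unity}$ that ``it will be important for our proof strategy that it have the specific form above,'' and your closing remark about why the lemma is confined to the closed systems is likewise consistent with Remark~\ref{rmk:deductionfail}.
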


\subsubsection{The bounded case}
Consider the following axioms:
\begin{eqnarray}
\mathsf{Distinct}_N. && \bigwedge_{1 \le i \neq j \le N} c^V_i \not\equiv c^V_j, \\
\mathsf{SumEquals}_N. && \sanssum_{x_i} \mathsf{t} \approx \sum_{i = 1}^N \mathsf{t}[X = {x_i} / X = c_i^X].\label{eqn:ax:sumeq}
\end{eqnarray}
\begin{proposition}
These are interderivable: (1) $\mathsf{AX}_N$; (2) $\mathsf{AX} + \mathsf{Distinct}_N$; (3) $\mathsf{AX} + \mathsf{SumEquals}_N$.
\end{proposition}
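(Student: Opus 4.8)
The plan is to prove the three-way equivalence by establishing a cycle of derivations: $(1) \Rightarrow (2) \Rightarrow (3) \Rightarrow (1)$, working within the base system $\mathsf{AX} = \mathsf{AX}^{\text{closed}}$ augmented in each case by the relevant axiom. Recall that $\mathsf{AX}_N = \mathsf{AX}^{\text{closed}} + \mathsf{Fin}_N$, where $\mathsf{Fin}_N$ asserts $\sanssum_{v_i}\probsymbol(\top) \approx \underline{N}$. The overall strategy is that each of the three axioms is a different way of pinning down that every variable has \emph{exactly} $N$ distinct values, so they should be provably equivalent over the common base; the content is to check that the base axioms $\mathsf{SumLower}$, $\mathsf{SumUpper}$, $\mathsf{EqDist}$, and $\mathsf{Pos}$ let us convert between these formulations.

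First I would show $\mathsf{AX}_N \vdash \mathsf{Distinct}_N$ and $\mathsf{AX}_N \vdash \mathsf{SumEquals}_N$. For $\mathsf{SumEquals}_N$, the key is a two-sided bound. The $\succsim$ direction comes from $\mathsf{SumLower}$: taking $S = \{c^V_1,\dots,c^V_N\}$ and using that these constants are pairwise distinct, we get $\sanssum_{v_i}\mathsf{t} \succsim \sum_{i=1}^N \mathsf{t}[V=v_i/V=c^V_i]$, but this requires first establishing $\mathsf{Distinct}_N$. To get $\mathsf{Distinct}_N$ from $\mathsf{Fin}_N$: if fewer than $N$ constants were distinct, then combining $\mathsf{Fin}_N$ (which forces the total sum $\sanssum_{v_i}\probsymbol(\top)$ to equal $\underline{N}$) with $\mathsf{SumUpper}$ (whose instances bound the sum above by collapsing equal constants via partitions) would yield a strictly smaller upper bound, a contradiction; here I expect to lean on $\mathsf{Pos}$ to ensure each value contributes positively. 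The $\precsim$ direction of $\mathsf{SumEquals}_N$ then follows from $\mathsf{SumUpper}$ applied with the partition whose blocks match the established distinctness pattern, again invoking $\mathsf{Fin}_N$ to rule out any value beyond the named $N$.

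Next, $(2) \Rightarrow (3)$: assuming $\mathsf{Distinct}_N$, derive $\mathsf{SumEquals}_N$ purely from $\mathsf{SumLower}$ and $\mathsf{SumUpper}$. The lower bound is immediate from $\mathsf{SumLower}$ with $S$ the full distinct set. For the upper bound, I would use $\mathsf{SumUpper}$ and argue that any partition of $\mathcal{C}_X(n)$ into more than $N$ nonempty blocks with pairwise-distinct representatives is inconsistent with $\mathsf{Distinct}_N$ together with $\mathsf{EqDist}$ (since a genuine $(N{+}1)$st distinct value would contradict the closed-world/distinctness setup), so the effective sum never exceeds the $N$-term version. Finally $(3) \Rightarrow (1)$: given $\mathsf{SumEquals}_N$, instantiate it with $\mathsf{t} := \probsymbol(\top)$ to obtain $\sanssum_{x_i}\probsymbol(\top) \approx \sum_{i=1}^N \probsymbol(\top) \approx \underline{N}$, which is exactly $\mathsf{Fin}_N$; the middle equality is the definitional unpacking of $\underline{N}$ as the $N$-term sum of $\probsymbol(\top)$.

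The main obstacle I anticipate is the interaction between $\mathsf{SumUpper}$ and the distinctness axioms when deriving the upper bounds, because $\mathsf{SumUpper}$ is an infinitary rule quantifying over all $n$ and all partitions $\Pi \in \mathbf{B}_n$: I must verify that the premises $\varphi_{n,\Pi}$ are \emph{derivable} for every $n$ and $\Pi$, not merely for $n = N$ with the identity partition. Concretely, for $n > N$ every partition's distinctness-guard conjunct $\bigwedge c^X_i \neq c^X_j$ across more than $N$ blocks must be shown inconsistent (so the implication $\varphi_{n,\Pi}$ holds vacuously) using $\mathsf{Distinct}_N$ or $\mathsf{Fin}_N$; and I should confirm that substituting a disjunction $\bigvee_{c^X_i \in \Pi_\upsilon} X = c^X_i$ for a single atom behaves correctly under $\mathsf{EqDist}$ so that collapsed blocks genuinely merge their probability mass. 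Handling these uniformly over all partitions, rather than a representative one, is where the bookkeeping is heaviest, though each individual case reduces to a routine application of the stated axioms.
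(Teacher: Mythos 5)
Your proposal is correct and takes essentially the same route as the paper's (much terser) proof: the cycle $(1)\Rightarrow(2)\Rightarrow(3)\Rightarrow(1)$, with $\mathsf{SumUpper}$ plus $\mathsf{Fin}_N$ forcing distinctness, $\mathsf{SumLower}$ and $\mathsf{SumUpper}$ together yielding $\mathsf{SumEquals}_N$, and instantiation with $\mathsf{t} = \probsymbol(\top)$ recovering $\mathsf{Fin}_N$. Two small simplifications: $\mathsf{Pos}$ is not actually needed (each summand of $\sanssum_{v_i}\probsymbol(\top)$ is exactly $\underline{1}$), and since the bounded signature has only $N$ constants per variable, partitions with more than $N$ blocks never arise---the only $\mathsf{SumUpper}$ premises that are not rendered vacuous by $\mathsf{Distinct}_N$ are those for discrete partitions.
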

\begin{proof}
$(1) \vdash (2)$: if $c_i^V \equiv c_j^V$ for some $i, j$, then $\sanssum_x \probsymbol(\top) \precsim \underline{N-1}$ by $\mathsf{SumUpper}$, contradicting $\mathsf{Fin}_N$;
$(2) \vdash (3)$: derive $\mathsf{SumEquals}_N$ using $\mathsf{SumLower}$ and $\mathsf{SumUpper}$ together;
$(3) \vdash (1)$: trivial by taking $\mathsf{t} = \probsymbol(\top)$.
\end{proof}
\subsubsection{Sums of sums}
We derive the following principle:
\begin{align*}
\sanssum_{v_i} (\mathsf{t}_1 + \mathsf{t}_2) \approx \sanssum_{v_i} \mathsf{t}_1 + \sanssum_{v_i} \mathsf{t}_2.
\end{align*}
Casing on the different possibilities of whether $c^X_i \equiv c^X_j$ for each $X$ and $i, j \le n$, using $\mathsf{SumUpper}$ and $\mathsf{SumLower}$, along with $\mathsf{EqReflex}$ and $\mathsf{EqReplace}$ for reasoning about equalities, we can derive for any $\mathsf{t}'$ that $\sanssum_{v_i} (\mathsf{t}_1 + \mathsf{t}_2) \precsim \mathsf{t}' \leftrightarrow \sanssum_{v_i} \mathsf{t}_1 + \sanssum_{v_i} \mathsf{t}_2 \precsim \mathsf{t}'$. Thus it suffices to show the following rule admissible:
\begin{align*}
\infer{\mathsf{t} \approx \mathsf{t}''}{\mathsf{t} \precsim \mathsf{t}' \leftrightarrow \mathsf{t}'' \precsim \mathsf{t}' \text{ for any term }\mathsf{t}'}.
\end{align*}
To see this, use Lemma~\ref{lem:deduction}.
If $\Gamma$ contains all the premises but $\Gamma \cup \{\mathsf{t} \not\approx \mathsf{t}'' \}$ is consistent, then supposing without loss that $\mathsf{t} \prec \mathsf{t}''$, we can prove $\mathsf{t} \prec \frac{\mathsf{t} + \mathsf{t}''}{\underline{2}} \prec \mathsf{t}''$, a contradiction.

\subsubsection{Conditional independence implication}
Suppose we have some conditional independence implication \eqref{eqn:liCI} that is true over finite ranges, encoded with free variables as $\{\varphi_1, \dots, \varphi_\ell \} \vDash \varphi_0$, where without loss we take those free ones to be $x_1, \dots, x_k$; we want to show $\{ \varphi_1, \dots, \varphi_\ell \} \vdash \varphi_0$ over $\mathsf{AX}_{\text{fin}}$.
Letting $\mathbf{X} = \{X_1, \dots, X_k\}$, note that, by the completeness of $\mathsf{AX}_{\text{poly}}$ for semi-algebraic reasoning \citep{IBELING2023103339}, we can show for each $n$ that $\{\varphi^n_1, \dots, \varphi^n_\ell \} \cup \big\{ \probsymbol\big( \bigwedge_{X \in \mathbf{X}} \bigvee_{i=1}^n X = c^X_i  \big) \approx \underline{1} \big\} \vdash \varphi^n_0$
where $\varphi^n_i = \bigwedge_{1 \le i_1, \dots, i_k \le n} \varphi_i\big[x_1 / c_{i_1}^{X_1} \big] \dots \big[ x_k / c_{i_k}^{X_k}\big]$.
By Lemma~\ref{lem:deduction} for $\mathcal{L}^{\text{closed}}$ and $\mathsf{FreeElim}$ this means $\{ \varphi_1, \dots, \varphi_\ell\} \vdash \probsymbol\big( \bigwedge_{X \in \mathbf{X}} \bigvee_{i=1}^n X = c^X_i  \big) \approx \underline{1}\rightarrow \varphi^m_0$, for each $n$ and $m \le n$.

Since $\{\varphi^n_0 : n \ge 1\}\vdash \varphi_0$ by $\mathsf{FreeIntro}$,
to complete the argument, it suffices to show the following rule is admissible:
\begin{align*}
\infer{\varphi}{\probsymbol\big(\bigwedge_{X \in \mathbf{X}} \bigvee_{i=1}^n X = c^X_i\big) \approx \underline{1} \rightarrow \varphi \text{ for each }n \in \mathbb{Z}^+}.
\end{align*}
To see this, if $\lnot \varphi$ is consistent with the premises, then conclude $\probsymbol\big(\bigwedge_{X \in \mathbf{X}} \bigvee_{i=1}^n X = c^X_i\big) \prec \underline{1}$ for all $n$.
Casing on whether $c^X_i \equiv c^X_j$ for each $X$ and $i, j \le n$, as $n \to \infty$ by consistency with $\mathsf{Unity}$ there must be infinitely many equivalence classes of constants under this relation, for at least some $X$. But this implies inconsistency with $\mathsf{Fin}$ via $\mathsf{SumUpper}$.

\subsection{Completeness}
\begin{theorem}\label{thm:axcomplete}
The six systems above are sound and strongly complete in the respective settings (Table~\ref{tab:axsys}).
\begin{table*}\centering
\begin{tabular}{@{}lll@{}}\toprule
Axiomatic System & Language & Model Class\\
\midrule
$\mathsf{AX}^{\text{closed}}$ & $\mathcal{L}^{\text{closed}}$ & $\mathcal{M}^+$ \\
$\mathsf{AX}^{\text{closed}}_{\text{fin}}$ & $\mathcal{L}^{\text{closed}}$ & $\mathcal{M}^+_{\text{fin}}$ \\
$\mathsf{AX}^{\text{closed}}_{N}$ & $\mathcal{L}_N^{\text{closed}}$ & $\mathcal{M}^+_{N}$ \\
$\mathsf{AX}$ & $\mathcal{L}$ & $\mathcal{M}^+$ \\
$\mathsf{AX}_{\text{fin}}$ & $\mathcal{L}$ & $\mathcal{M}^+_{\text{fin}}$ \\
$\mathsf{AX}_{N}$ & $\mathcal{L}_N$ & $\mathcal{M}^+_{N}$ \\
\bottomrule
\end{tabular}
\caption{Deductive systems (Definitions \ref{def:axclosed}, \ref{def:axopen}), languages (Definition \ref{def:poslang}), and model classes (Definition \ref{def:posmod}) with respect to which we show soundness and completeness.}
\label{tab:axsys}
\end{table*}
\end{theorem}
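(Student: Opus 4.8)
The plan is to prove soundness by the routine check that every axiom is valid and every rule preserves validity over the relevant model class, and then to establish strong completeness in its contrapositive form: every consistent set of formulas is satisfiable. I would treat the three closed systems first and deduce the free-variable systems from them. For that reduction, given a consistent $T \subseteq \mathcal{L}$ in $\mathsf{AX}_*$, I form the set $\hat{T}$ of all constant-substitution instances $\gamma[\mathbf{v}/\mathbf{c}]$ of its members. Using $\mathsf{FreeElim}$ one checks $\hat{T}$ is consistent in $\mathsf{AX}^{\text{closed}}_*$; because every value is named by a constant, a closed-world model of $\hat{T}$ is automatically a model of $T$ under the universal reading of free variables, and $\mathsf{FreeIntro}$ guarantees that whenever $T \not\vdash \varphi$ some instance $\varphi[\mathbf{v}/\mathbf{c}]$ remains underivable, so the closed model can be arranged to falsify that instance. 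This reduces everything to the closed case.

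For the closed systems I would run a Lindenbaum-style construction, enumerating all formulas and greedily building a maximal consistent $T^* \supseteq T$. The essential refinement is that $T^*$ must be \emph{saturated} with respect to each infinitary rule: whenever the negation of a rule's conclusion is consistent to adjoin, I simultaneously adjoin the negation of one of its (infinitely many) premises as a witness. This must be arranged for $\mathsf{Conv}$, $\mathsf{Unity}$, $\mathsf{SumUpper}$, and (in the finite-range system) $\mathsf{Fin}$; soundness of each rule is precisely what guarantees such a witness always exists, so consistency is preserved at every stage.

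From $T^*$ I would read off the canonical model. The relation $c \equiv c' \in T^*$ is an equivalence on each $\mathcal{C}_V$ by $\mathsf{EqReflex}$ and $\mathsf{EqReplace}$, and $\mathsf{EqDist}$ renders its classes mutually exclusive events; these classes constitute $\Val(V)$, with the closed-world condition holding by construction and the cardinality pinned to $N$ (resp.\ forced merely finite) by $\mathsf{Fin}_N$ (resp.\ $\mathsf{Fin}$). Reducing conditionals to unconditionals via $\mathsf{Cond}$ and invoking the completeness of $\mathsf{AX}_{\text{poly}}$ for semialgebraic reasoning over the reals, the polynomial (in)equalities in $T^*$ over the primitive unconditional probabilities admit a real solution; $\mathsf{Pos}$ makes it strictly positive and $\mathsf{Unity}$ forces total mass one with no mass escaping to infinity, so a Carathéodory-type extension yields a countably additive positive measure on $\bigtimes_{V}\Val(V)$, realizable by a recursive, measurable, countable, closed-world, positive SCM. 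It remains to prove the truth lemma $\mathfrak{M} \vDash \psi \iff \psi \in T^*$, whence $\mathfrak{M}\vDash T$.

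The main obstacle I anticipate is the summation clause of the truth lemma together with the attendant convergence of the canonical measure: one must show the semantic value of $\sanssum_{v} \mathsf{t}$ equals the value dictated by $T^*$. Here $\mathsf{SumLower}$ bounds the formal sum below by every finite sum over distinct constants, while $\mathsf{SumUpper}$---quantifying over all partitions of initial constant-segments to account for as-yet-unknown coincidences among constants---bounds it above; $\mathsf{Conv}$ closes the resulting gap in the limit, and $\mathsf{Unity}$ and $\mathsf{Fin}$ rule out the pathologies peculiar to infinite and to merely finite ranges respectively. Verifying that these sandwiching bounds actually force the semantic sum to its intended value, uniformly and compositionally across nested summation terms, is the delicate heart of the argument.
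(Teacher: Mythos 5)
Your architecture tracks the paper's own proof closely: soundness checked directly, the free-variable systems reduced to the closed ones via $\mathsf{FreeElim}$/$\mathsf{FreeIntro}$ together with the closed-world assumption, and then, for the closed systems, a Lindenbaum construction saturated with respect to the infinitary rules, a canonical model built from $\equiv$-equivalence classes of constants, and a truth lemma whose genuinely new clause is the summation case handled by sandwiching with $\mathsf{SumLower}$/$\mathsf{SumUpper}$ and $\mathsf{Conv}$. But there is one genuine error at the heart of your Lindenbaum step: you claim that ``soundness of each rule is precisely what guarantees such a witness always exists.'' This justification would fail if you tried to carry it out. Soundness relates validity of premises to validity of conclusions; it cannot by itself yield any fact about the \emph{syntactic} consistency of $T_i$ extended with the negation of a premise---passing from semantic facts to consistency facts is exactly what completeness, the very thing being proved, would license, so the appeal is circular. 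The correct argument (the paper's Lemma~\ref{lem:infruleunwind}, following Perovi\'c) is purely proof-theoretic and rests on the rule's \emph{derivability} in the system together with the Deduction Theorem (Lemma~\ref{lem:deduction}): if $T$ is consistent, $T \cup \{\varphi \to \psi\}$ is inconsistent, and yet $T \cup \{\varphi \to \lnot\psi_n\}$ were inconsistent for every premise index $n$, then the deduction theorem and propositional reasoning give $T \vdash \varphi \to \psi_n$ for every $n$, whence the infinitary rule yields $T \vdash \varphi \to \psi$, contradicting the inconsistency of $T \cup \{\varphi \to \psi\}$ with $T$ consistent.

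A secondary weak point is your construction of the canonical measure by ``invoking the completeness of $\mathsf{AX}_{\text{poly}}$ \dots to admit a real solution.'' The maximal consistent set $T^*$ imposes infinitely many polynomial constraints over infinitely many primitive probabilities, and these languages are incompact (as the paper emphasizes), so finitary semialgebraic completeness does not deliver a simultaneous real solution to all of them. The paper instead defines the measure of each cylinder set directly as the supremum of derivable rational lower bounds, Equation~\eqref{eqn:defmu}, with $\mathsf{Conv}$ securing coherence of these values, $\mathsf{Unity}$ normalization, and $\mathsf{Pos}$ positivity, before applying the extension theorem; your sketch should be repaired to use this (or an equivalent) explicit definition rather than an appeal to $\mathsf{AX}_{\text{poly}}$ completeness.
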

\begin{proof}
Soundness is straightforward.
As for completeness, it suffices to prove only the closed cases:
\begin{lemma}
Suppose $\mathsf{AX}^{\text{closed}}_*$ is complete for $\mathcal{L}^{\text{closed}}_*$ over $\mathcal{M}^+_*$, where $*$ ranges over the cases in Table~\ref{tab:axsys}.
Then $\mathsf{AX}_*$ is complete for $\mathcal{L}_*$ over $\mathcal{M}^+_*$.
\end{lemma}
\begin{proof}
Suppose $\Gamma \vDash \varphi$ over $\mathcal{M}^+_*$, where $\varphi \in \mathcal{L}_*$.
Write $\varphi/\psi$ for $\psi \in \mathcal{L}^{\text{closed}}_*$ if $\psi = \varphi[x_1 / c^{X_1}_{i_1}]\dots[x_n / c^{X_n}_{i_n}]$ for some random variables $X_1, \dots, X_n \in \mathbf{V}$, corresponding free variables $x_1, \dots, x_n$, and indices $i_1, \dots, i_n \in \mathbb{Z}^+$.
Define $\Delta = \{ \delta \in  \mathcal{L}^{\text{closed}}_* : \gamma/\delta \text{ for some } \gamma \in \Gamma \}$ and $\Psi = \{ \psi \in \mathcal{L}_*^{\text{closed}} : \varphi/\psi \}$.
Then $\Gamma \vDash \varphi$ implies that $\Delta \vDash \psi$ for any $\psi \in \Psi$.
Note that $\Gamma \vdash \delta$ for every $\delta \in \Delta$ using $\mathsf{FreeElim}$, while $\Delta \vdash \psi$ for every $\psi \in \Psi$ by completeness of $\mathsf{AX}_*^{\text{closed}}$ for $\mathcal{L}_*^{\text{closed}}$; finally $\Psi \vdash \varphi$ using $\mathsf{FreeIntro}$.
Thus $\Gamma \vdash \varphi$.
\end{proof}
As for $\mathsf{AX}^{\text{closed}}_{N}$, the axiom \eqref{eqn:ax:sumeq} is derivable and
allows the elimination of all primitive sum operators, thereby reducing the problem essentially to the setting of \citet{Perovic} where the result has already been shown.
Thus we prove only $\mathsf{AX}^{\text{closed}}$ and the finite case $\mathsf{AX}^{\text{closed}}_{\text{fin}}$.
By Lemma~\ref{lem:deduction} we can prove completeness in the usual way by showing that any consistent set $\Gamma$ is satisfiable, toward which we will now show $\Gamma$ can be extended to some maximal consistent $\Gamma^*$.
The preliminary results below are variations of \citet[Lemma 1, Definition 6, Theorem 2]{Perovic}; we omit their completely analogous proofs.
\begin{lemma}\label{lem:infruleunwind}
Consider any (instance of) one of our infinitary rules, with premises $\varphi \rightarrow \psi_i$ indexed $i \in \mathbb{Z}^+$ and conclusion $\varphi \rightarrow \psi$.
Suppose $T \subset \mathcal{L}$ is consistent but $T \cup \{ \varphi \rightarrow \psi \}$ is inconsistent. Then there is some $n \in \mathbb{Z}^+$ such that $T \cup \{ \varphi \rightarrow \lnot \psi_n\}$ is consistent.
\end{lemma}
\begin{definition}
\label{def:completion}
Enumerate the formulas of $\mathcal{L}$ as $\{ \varphi_1, \varphi_2, \varphi_3, \dots\}$.
Let $T \subset \mathcal{L}$ be consistent, and define for each $i$ a set $T_i$ as follows:
\begin{enumerate}
\item $T_0 = T$.
\item If $T_i \cup \{ \varphi_i \}$ is consistent then $T_{i+1} = T_i \cup \{ \varphi_i \}$.
\item If $T_i \cup \{ \varphi_i \}$ is inconsistent then:
\begin{enumerate}
\item If $\varphi_i$ is $\varphi \rightarrow \mathsf{t} \precsim \underline{0}$ then let $T_{i+1} = T_i \cup \{ \varphi \rightarrow \mathsf{t} \succ \underline{1/n}\}$ where $n$ is that guaranteed by Lemma~\ref{lem:infruleunwind}, applied to $\mathsf{Conv}$, such that $T_{i+1}$ is consistent.\label{firstcasecompletion}
\item If $\varphi_i$ is $\varphi \rightarrow \Big[\underline{1} - \probsymbol\Big(\bigwedge_{X \in \mathbf{X}}  X = c_X^1 \lor \lnot X = c_X^1\Big) \succ \underline{q} \Big]$ then let $T_{i+1} = T_i \cup \{ \varphi \rightarrow \Big[\underline{1} - \probsymbol\Big(\bigwedge_{X \in \mathbf{X}} \bigvee_{1\le i \le n} X = c_X^i\Big) \precsim \underline{q} \Big]\}$ where $n$ is that guaranteed by Lemma~\ref{lem:infruleunwind}, applied to $\mathsf{Unity}$, such that $T_{i+1}$ is consistent.
\item If $\varphi_i$ is $\varphi \rightarrow \sanssum_x \mathsf{t} \precsim \mathsf{t}'$ then let $T_{i+1} = T_i \cup \{ \varphi \rightarrow \lnot\varphi_{n, \Pi}\}$ where $\varphi_{n, \Pi}$ is the formula \eqref{eqn:sumupperinstance}  and $n, \Pi$ are guaranteed by Lemma~\ref{lem:infruleunwind}, applied to $\mathsf{SumUpper}$, to make $T_{i+1}$ consistent.
\item In the finite case: if $\varphi_i$ is $\varphi \rightarrow \sanssum_x \probsymbol(\top) \prec \underline{0}$ then let $T_{i+1} = T_i \cup \{ \varphi\rightarrow \sanssum_x \probsymbol(\top) \prec \underline{n} \}$ where $n$ is guaranteed by Lemma~\ref{lem:infruleunwind}, applied to $\mathsf{Fin}$, to make $T_{i+1}$ consistent.\label{lastcasecompletion}
\item Otherwise let $T_{i+1} = T_i$.
\end{enumerate}
\end{enumerate}
Then let $T^* = \bigcup_{i = 1}^\infty T_i$.
The above are disjoint and exhaustive since the structural form of $\varphi_i$ is distinct for all cases \ref{firstcasecompletion}--\ref{lastcasecompletion} and any possible instances of rules therein.
\end{definition}
\begin{lemma}[Lindenbaum's]
Suppose $T$ is consistent with $T^*$  that defined by Definition~\ref{def:completion}.
Then (1) $T^* \vdash \varphi$ implies $\varphi \in T^*$,
(2) not every formula is in $T^*$, and
(3) for each $\varphi$ either $\varphi \in T^*$ or $\lnot \varphi \in T^*$. \qed
\end{lemma}
Thus we extend $\Gamma$ to $\Gamma^*$; we will now define a model $\mathfrak{M}^*$ satisfying $\Gamma^*$.
We first work toward defining the constant interpretations and ranges. We have:
\begin{lemma}
Define a relation $\sim$ on $\mathcal{C}_V$ by $c_i^V \sim c_j^V$ iff $c_i^V \equiv c_j^V \in \Gamma^*$. Then $\sim$ is an equivalence relation.
\end{lemma}
\begin{proof}
By $\mathsf{EqReplace}$ with $\varphi = v \equiv c$ we have $c\equiv c' \rightarrow (c \equiv c \rightarrow c' \equiv c)$ so we can derive $c \equiv c' \rightarrow c' \equiv c$ since $\mathsf{EqReflex}$ is an axiom.
Transitivity is clear as well, since we have $c' \equiv c'' \rightarrow ( c \equiv c' \rightarrow c \equiv c'' )$.
\end{proof}
For each $V \in \mathbf{V}$ the set of equivalence classes under $\sim$ is countable; if finite of cardinality $M$, define $\Val(V) = \{1, \dots, M\}$ and if infinite, define $\Val(V) = \mathbb{Z}^+$, with $g_V : \Val(V) \to \mathcal{C}_V$ a map defining an association of elements $v$ of $\Val(V)$ to equivalence classes $[g_V(v)]$ represented by $g_V(v)$, i.e. such that $[g_V(v)] \neq [g_V(v')]$ for $v \neq v' \in \Val(V)$.
In particular $g_V$ is injective and 
we define the constant interpretations $(c^V_i)^{\mathfrak{M}^*} = g_V^{-1}(c^V_i)$.
In the finite cases, consistency with $\mathsf{Fin}$ implies that each $\Val(V)$ is finite, since otherwise by $\mathsf{SumLower}$ we can show $\sanssum_v \probsymbol(\top) \succsim \underline{n}$ for each $n$.
We now define a measure $\mu$ on a basis of cylinder sets to obtain the model $\mathfrak{M}^*$.
That is, we define the measure $\mu(S)$ on sets $S = \bigtimes_{V \in \mathbf{V}} S_V$ where $S_V = \mathrm{Val}(V)$ for all except finitely many $V$. Equivalently, assuming $S$ is nonempty, we have $S = \pi^{-1}_{\mathbf{X}}\big((i_X)_{X \in \mathbf{X}}\big)$ for some finite $\mathbf{X} \subset \mathbf{V}$ and values $i_X \in\Val(X)$ for each $X \in \mathbf{X}$, where $\pi_{\mathbf{X}} : \bigtimes_{V \in \mathbf{V}}\Val(V) \to \bigtimes_{X \in \mathbf{X}}\Val(X)$ is the projection map.
We define:
\begin{align}
\mu\big[\pi^{-1}_{\mathbf{X}}\big((i_X)_{X \in \mathbf{X}}\big) \big] = \mathrm{sup} \Big\{s \in [0, 1] \cap \mathbb{Q} : \Gamma^* \vdash \probsymbol\Big(\bigwedge_{X \in \mathbf{X}} X = g_X(i_X) \Big)\succsim \underline{s}\Big\}.\label{eqn:defmu}
\end{align}
\begin{lemma}[{cf. \citealt[Lemma~2]{Perovic}}]
$\mu$ extends uniquely to a positive probability measure. \qed
\end{lemma}
\begin{proof}
From standard extension theorems, with normalization guaranteed by consistency with $\mathsf{Unity}$ and positivity by consistency with $\mathsf{Pos}$.
\end{proof}
\begin{lemma}
We have that $\mathfrak{M}^* \vDash \varphi$ iff $\varphi \in \Gamma^*$.
\end{lemma}
\begin{proof}
The axiom $\mathsf{EqReplace}$ guarantees that our definition \eqref{eqn:defmu} in terms of a single representative constant $g_X(i_X) \in \mathcal{C}_X$ is consistent in $\Gamma^*$.
Otherwise, comparing to \citet[Theorem~3]{Perovic}, we have two features to address.
First, $\mathsf{EqDist}$ guarantees the result for formulas that include primitive probabilities $\probsymbol(\delta)$ where literals involving the same variable $V$ occur more than once in $\delta$.

Second is the presence of sums here, for which it suffices to show that $\sanssum_{x_i} \mathsf{t} \succsim \underline{q} \in \Gamma^*$ iff $q \le \semantics{\sanssum_{x_i} \mathsf{t}}^{\mathfrak{M}^*}$, and $\sanssum_{x_i} \mathsf{t} \precsim \underline{q} \in \Gamma^*$ iff $q \ge \semantics{\sanssum_{x_i} \mathsf{t}}^{\mathfrak{M}^*}$, 
for any term $\mathsf{t}$ and $q \in \mathbb{Q}$.
Suppose $\sanssum_{x_i} \mathsf{t} \succsim \underline{q} \in \Gamma^*$.
If $q > \semantics{\sanssum_{x_i} \mathsf{t}}^{\mathfrak{M}^*}$ then there is some rational $\varepsilon > 0$ such that
\begin{align*}
\semantics{ \mathsf{t}}^{\mathfrak{M}^*}_{\iota[X, i \mapsto 1]} + \dots + \semantics{ \mathsf{t}}^{\mathfrak{M}^*}_{\iota[X, i \mapsto n]} \le q - \varepsilon
\end{align*}
for any $n \in \Val(X)$.
By consistency with $\mathsf{SumUpper}$, this means $\underline{q- \varepsilon} \succsim \sanssum_{x_i} \mathsf{t} \in \Gamma^*$, so $\underline{q- \varepsilon} \succsim \underline{q} \in \Gamma^*$, a contradiction.
Conversely suppose $q \le \semantics{\sanssum_{x_i} \mathsf{t}}^{\mathfrak{M}^*}$. If $\sanssum_{x_i} \mathsf{t} \prec \underline{q} \in \Gamma^*$ then by consistency with $\mathsf{Conv}$ there is some $\varepsilon > 0$ such that $\sanssum_{x_i} \mathsf{t} \prec \underline{q-\varepsilon} \in \Gamma^*$ and by $\mathsf{SumLower}$ we have that $\semantics{ \mathsf{t}}^{\mathfrak{M}^*}_{\iota[X, i \mapsto 1]} + \dots + \semantics{ \mathsf{t}}^{\mathfrak{M}^*}_{\iota[X, i \mapsto n]} < q -\varepsilon$ for any $n \in \Val(X)$. This implies $\semantics{\sanssum_{x_i} \mathsf{t}}^{\mathfrak{M}^*} \le q-\varepsilon$, a contradiction.

Now suppose $\sanssum_{x_i} \mathsf{t} \precsim \underline{q} \in \Gamma^*$. If $q < \semantics{\sanssum_{x_i} \mathsf{t}}^{\mathfrak{M}^*}$ then there is some $n \in \Val(X)$ such that $q < \semantics{ \mathsf{t}}^{\mathfrak{M}^*}_{\iota[X, i \mapsto 1]} + \dots + \semantics{ \mathsf{t}}^{\mathfrak{M}^*}_{\iota[X, i \mapsto n]}$, and by $\mathsf{SumLower}$, we have $\sanssum_{x_i} \mathsf{t} \succ \underline{q} \in \Gamma^*$, a contradiction.
Conversely if $q \ge \semantics{\sanssum_{x_i} \mathsf{t}}^{\mathfrak{M}^*}$, then $q \ge \semantics{ \mathsf{t}}^{\mathfrak{M}^*}_{\iota[X, i \mapsto 1]} + \dots + \semantics{ \mathsf{t}}^{\mathfrak{M}^*}_{\iota[X, i \mapsto n]}$ for any $n \in \Val(X)$, so that $\underline{q} \succsim \sanssum_{x_i} \mathsf{t} \in \Gamma^*$ by $\mathsf{SumUpper}$.
\end{proof}
This completes the proof of Theorem~\ref{thm:axcomplete}, showing $\mathfrak{M}^*$ satisfies $\Gamma^* \supset \Gamma$. Note that $\mathsf{Cond}$ has been used implicitly in working with plain probabilities throughout rather than conditional probabilities.
\end{proof}


\section{Conclusion}\label{section: conclusion}
Although the present contribution has settled some of the most basic questions about probability logics with summation, many  open questions remain. As a first example, although we have been able to show strong completeness for infinitary proof systems, questions of weak completeness may also be of interest.

In that direction, consider a language $\mathcal{L}_{\star}$, which is just like $\mathcal{L}_{\textnormal{causal}}$, except that we allow no constants at all, and also disallow equalities between free variables. In other words, this language has only (free and bound) variables occurring in atomic expressions $V =v$; it is of interest since Ex.~\ref{example:front} and other uses of $do$-calculus fall within this natural fragment. We noted in Theorem \ref{theorem: not RE} that this language (or any other language sufficient for encoding conditional independence statements with unbounded variables ranges) will be undecidable. Meanwhile, the proof of Theorem \ref{RE-Pi2} can be readily adapted to show the set of satisfiable $\mathcal{L}_{\star}$-expressions is  recursively enumerable. It follows that validity cannot also be recursively enumerable, which in turn implies the following:


\begin{corollary}
There is no weakly complete, recursive axiomatization of $\mathcal{L}_{\star}$ over models with unbounded finite variable ranges. 
\end{corollary}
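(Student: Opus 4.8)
The plan is to show that the set of valid $\mathcal{L}_{\star}$-formulas is not recursively enumerable, from which the corollary is immediate: any \emph{finitary} recursive proof system has a recursively enumerable set of theorems, and for a weakly complete (and sound) system this set coincides with the valid formulas. It suffices to work with closed formulas throughout, since a weakly complete system must in particular derive every valid closed formula. (Read ``recursive axiomatization'' finitarily here; the contrast is precisely with the infinitary systems of \S\ref{section: axiomatization}.)

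First I would record that satisfiability over $\mathcal{M}_{\text{fin}}$ is recursively enumerable for $\mathcal{L}_{\star}$, by adapting the proof of Theorem~\ref{RE-Pi2}: given a formula, enumerate all assignments of finite ranges to the finitely many variables occurring in it, and for each such profile invoke the decision procedure of Theorem~\ref{theorem: succETR completeness}, declaring the formula satisfiable as soon as one profile succeeds. Since $\mathcal{L}_{\star} \subseteq \mathcal{L}_{\text{causal}}$, the argument carries over unchanged.

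The crux is to show satisfiability is nonetheless \emph{undecidable}, by reducing from the conditional-independence implication problem of Theorem~\ref{theorem: conditional independence undecidable}. The obstacle is exactly the failure of the deduction theorem noted in Remark~\ref{rmk:deductionfail}: an entailment $\bigwedge_i p_i \vDash p_0$ between conditional-independence statements cannot be collapsed into a single implication-formula, because its premises are universally quantified over values whereas a counterexample falsifies the conclusion only at \emph{some} values. I would circumvent this by using the summation operator to internalize each conditional independence as a \emph{closed} formula. Writing $D_i$ for the polynomial $\probsymbol(\mathbf{u}_i \land \mathbf{v}_i \land \mathbf{w}_i)\cdot \probsymbol(\mathbf{w}_i) - \probsymbol(\mathbf{u}_i \land \mathbf{w}_i)\cdot\probsymbol(\mathbf{v}_i \land \mathbf{w}_i)$ in the value-atoms of $\mathbf{U}_i, \mathbf{V}_i, \mathbf{W}_i$, the closed formula $P_i := \sanssum \cdots D_i^2 \approx \underline{0}$ (a nested sum over all values of the bound variables) holds in a model iff the $i$th independence does, since a sum of squares vanishes iff each summand does; note that this unconditional form sidesteps any zero-denominator issue. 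Then $\chi := \bigwedge_{i=1}^{\ell} P_i \land \lnot P_0$ lies in $\mathcal{L}_{\star}$ (it uses neither constants nor equalities between free variables) and is satisfiable over $\mathcal{M}_{\text{fin}}$ precisely when the implication admits a countermodel. As the map from implication-instances to $\chi$ is computable, and every finite-support joint distribution is realized by some model in $\mathcal{M}_{\text{fin}}$, Theorem~\ref{theorem: conditional independence undecidable} makes satisfiability of $\chi$---hence satisfiability in $\mathcal{L}_{\star}$ generally---undecidable.

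Finally I would combine the two. For closed formulas, $\varphi$ is valid iff $\lnot\varphi$ is unsatisfiable, so validity is the image of unsatisfiability under the computable map $\varphi \mapsto \lnot\varphi$. If validity were recursively enumerable, satisfiability would be co-recursively-enumerable; being also recursively enumerable by the first step, it would be decidable, contradicting the undecidability just established. Hence validity is not recursively enumerable, and no finitary recursive weakly complete axiomatization of $\mathcal{L}_{\star}$ over unbounded-finite-range models can exist. The main obstacle is the universal-over-values versus existential mismatch, resolved by the summation-based sum-of-squares encoding; the secondary points to verify are that the quantification over finite-support distributions in Theorem~\ref{theorem: conditional independence undecidable} coincides with quantification over $\mathcal{M}_{\text{fin}}$, and that the intended reading of ``recursive axiomatization'' is finitary.
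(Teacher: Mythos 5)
Your proof is correct and shares the paper's overall skeleton---satisfiability of $\mathcal{L}_\star$ over $\mathcal{M}_{\text{fin}}$ is recursively enumerable (by adapting Theorem~\ref{RE-Pi2}) but undecidable (via Theorem~\ref{theorem: conditional independence undecidable}), hence validity is not recursively enumerable, hence no recursive weakly complete finitary system exists---but the key undecidability reduction is genuinely different, and yours is the more careful one. The paper simply points back to Theorem~\ref{theorem: not RE}, whose underlying reduction encodes each conditional independence as an \emph{open} formula with free range variables and renders Li's implication as an entailment $\bigwedge_i p_i \vDash p_0$ between open formulas. You instead internalize each independence as a \emph{closed} formula $P_i$ via a nested sum-of-squares identity, so that the single closed formula $\bigwedge_{i=1}^{\ell} P_i \land \lnot P_0$ is satisfiable over $\mathcal{M}_{\text{fin}}$ exactly when Li's implication has a finite-support countermodel. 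This difference matters: under the universal reading of free variables, ``$\varphi$ is valid iff $\lnot\varphi$ is unsatisfiable'' holds only for closed $\varphi$ (for instance $\lnot(\probsymbol(V=v)\succ\underline{0})$ is unsatisfiable even though $\probsymbol(V=v)\succ\underline{0}$ is not valid---the same phenomenon behind Remark~\ref{rmk:deductionfail}), and weak completeness is a claim about single valid formulas, not about entailments between open formulas. So the passage from ``entailment with free variables is undecidable'' to ``the set of valid formulas is not recursively enumerable'' requires precisely the closed-formula detour you construct, and your sum-of-squares encoding (which, as you note, also avoids conditional probabilities and hence zero-denominator issues) supplies it. In short, you reproduce the paper's argument while making rigorous a step it glosses over; what remains is only syntactic trivia, e.g.\ expressing $\mathsf{t}\approx\underline{0}$ constant-free inside $\mathcal{L}_\star$ (say as $\mathsf{t}+\mathsf{t}\approx\mathsf{t}$), and verifying, as you flag, that finite-support joint distributions in Li's theorem correspond exactly to models in $\mathcal{M}_{\text{fin}}$.
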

\noindent Whether any other settings we have investigated here can be given (finitary) weak axiomatizations we leave as a worthy open question. Another natural question about axiomatization concerns the full causal language. As we have seen, the  principles that encode reasoning about pure probability are already rather complex. Causal reasoning---especially about recursive models---introduces another source of compactness failure. So strong axiomatization would again demand infinitary schemes. We leave this work also for a future occasion. 

Several natural open questions also remain concerning complexity. For example, while we know that the satisfiability problem for the general, ``open universe'' setting is undecidable (Theorem \ref{theorem: not RE}), we do not know how undecidable it is, e.g., when variable ranges can be infinite. A reasonable conjecture is that it is complete for $\Sigma^0_1$, but this remains to be shown. 

Finally, moving beyond the systems we have investigated here, natural extensions suggest themselves for investigation. The continuous setting, with integration operators replacing summation operators, would be of particular interest. Languages for integration have been explored by model theorists (see, e.g., \citealt{Bagheri}), though axiomatic questions appear not to have received much attention. Marginalization (both through summation and through integration) is also closely related to issues of \emph{abstraction} of models, a topic that has been of particular interest in the recent causality literature (see \citealt{Bongers,Geiger2024} for connections between marginalization and causal abstraction). A causal-probability logical system capable of reasoning explicitly about abstraction would also be worth investigating.


\newpage
\bibliographystyle{apalike}
\bibliography{refs}{}

\end{document}